\definecolor{darkgreen}{RGB}{0,90,0} 
\newcommand{\cmark}{\ding{51}}%
\newcommand{\xmark}{\ding{55}}%
\newcommand{\ind}{\mathds{1}}
\def\btheta{\boldsymbol{\theta}}
\DeclareMathOperator*{\argmin}{\arg\!\min}
\DeclareMathOperator*{\argmax}{\arg\!\max}
\theoremstyle{plain}
\newtheorem{theorem}{Theorem}[section]
\newtheorem{lemma}[theorem]{Lemma}
\newtheorem{corollary}[theorem]{Corollary}
\theoremstyle{definition}
\newtheorem{definition}[theorem]{Definition}
\newtheorem{assumption}[theorem]{Assumption}
\newtheorem{example}{Example}
\theoremstyle{remark}
\newtheorem{remark}[theorem]{Remark}
\newtheorem{algorithm}{Algorithm}
\def\grad{\nabla}
\def\ba{\mathbf{a}}
\def\bA{\mathbf{A}}
\def\bC{\mathbf{C}}
\def\bD{\mathbf{D}}
\def\cB{\mathcal{B}}
\def\cD{\mathcal{D}}
\def\cE{\mathcal{E}}
\def\cG{\mathcal{G}}
\def\cK{\mathcal{K}}
\def\cL{\mathcal{L}}
\def\cO{\mathcal{O}}
\def\cP{\mathcal{P}}
\def\cU{\mathcal{U}}
\def\cX{\mathcal{X}}
\def\cY{\mathcal{Y}}
\def\smskip{\smallskip}
\def\texitem#1{\par\smskip\noindent\hangindent 25pt
               \hbox to 25pt {\hss #1 ~}\ignorespaces}
\def\norm#1{\left\|#1\right\|}
\def\fprod#1{\left\langle#1\right\rangle}
\newcommand{\BEAS}{\begin{eqnarray*}}
\newcommand{\EEAS}{\end{eqnarray*}}
\newcommand{\BEA}{\begin{eqnarray}}
\newcommand{\EEA}{\end{eqnarray}}
\newcommand{\BEQ}{\begin{eqnarray}}
\newcommand{\EEQ}{\end{eqnarray}}
\newcommand{\BIT}{\begin{itemize}}
\newcommand{\EIT}{\end{itemize}}
\newcommand{\BNUM}{\begin{enumerate}}
\newcommand{\ENUM}{\end{enumerate}}
\newcommand{\BA}{\begin{array}}
\newcommand{\EA}{\end{array}}
\newcommand{\ones}{\mathbf 1}
\newcommand{\reals}{\mathbb{R}}
\newif\ifpagenumbering
\newsavebox{\theorembox}
\newsavebox{\lemmabox}
\newsavebox{\defnbox}
\newsavebox{\corollarybox}
\newsavebox{\remarkbox}
\newsavebox{\assbox}
\savebox{\theorembox}{\noindent\bf Theorem}
\savebox{\lemmabox}{\noindent\bf Lemma}
\savebox{\defnbox}{\noindent\bf Definition}
\savebox{\corollarybox}{\noindent\bf Corollary}
\savebox{\remarkbox}{\noindent\bf Remark}
\def\mort{}
\theoremstyle{plain}
\theoremstyle{definition}
\theoremstyle{remark}
\newcommand{\mt}[1]{{\color{black}#1}}
\newcommand{\mot}[1]{{\color{black}#1}}
\begin{document}
\allowdisplaybreaks
\title{\bf Projection-Free Methods for Solving Nonconvex-Concave Saddle Point Problems
}


\author{
  \begin{tabular}{c}
    Morteza Boroun\textsuperscript{*} \qquad Erfan Yazdandoost Hamedani\textsuperscript{*} \qquad Afrooz Jalilzadeh\footnote{Department of Systems and Industrial Engineering, The University of Arizona, Tucson, AZ, USA.\\
     \texttt{\{morteza, erfany, afrooz\}@arizona.edu}}
  \end{tabular}
}
\date{}

\maketitle

\begin{abstract}
In this paper, we investigate a class of constrained saddle point (SP) problems where the objective function is nonconvex-concave and smooth. This class of problems has wide applicability in machine learning, including robust multi-class classification and dictionary learning. Several projection-based primal-dual methods have been developed to tackle this problem; however, the availability of methods with projection-free oracles remains limited. To address this gap, we propose efficient single-loop projection-free methods reliant on first-order information. In particular, using regularization and nested approximation techniques, we propose a primal-dual conditional gradient method that solely employs linear minimization oracles to handle constraints. Assuming that the constraint set in the maximization is strongly convex, our method achieves an $\epsilon$-stationary solution within $\mathcal{O}(\epsilon^{-6})$ iterations. When the projection onto the constraint set of maximization is easy to compute, we propose a one-sided projection-free method that achieves an $\epsilon$-stationary solution within $\mathcal{O}(\epsilon^{-4})$ iterations. Moreover, we present improved iteration complexities of our methods under a strong concavity assumption. To the best of our knowledge, our proposed algorithms are among the first projection-free methods with convergence guarantees for solving nonconvex-concave SP problems.
\end{abstract}

\section{Introduction}
\label{sec:intro}
Let $(\cX,\norm{\cdot}_\cX)$ and $(\cY,\norm{\cdot}_\cY)$ be finite-dimensional, real normed spaces with the corresponding dual spaces denoted by $(\cX^*,\norm{\cdot}_{\cX^*})$ and $(\cY^*,\norm{\cdot}_{\cY^*})$, respectively.  In this paper, we study a saddle point (SP) problem of the following form:
\begin{align}\label{main}
    \min_{x\in X}\max_{y\in Y}~\cL(x,y),
\end{align}
where $\cL:\cX\times \cY\to \mathbb R$ is possibly nonconvex in $x$ for any $y\in \mt{Y}$ and concave in $y$ for any $x\in X$ with certain differentiability properties (see Assumption \ref{assump1}); moreover, $X\subseteq \cX$ and $Y\subseteq \cY$ are convex and compact sets. Such a problem has a broad range of applications including robust optimization \cite{ben2009robust}, reinforcement
learning  \cite{dai2018sbeed}, 
and adversarial learning \cite{goodfellow2020generative}, to name just a few. 

There has been vast research conducted on developing algorithms for solving SP problems \cite{nemirovski2004prox,ouyang2015accelerated,chambolle2016ergodic,ouyang2021lower,hamedani2021primal}. Despite the extensive studies conducted on the development of algorithms to solve problem \eqref{main} a common drawback of these methods is their reliance on costly operations involving the projection onto constraint sets. In the optimization literature, the shortcomings associated with projection-based techniques prompted the emergence and advancement of projection-free approaches. Namely, Frank Wolfe-based algorithms \cite{frank1956algorithm} are first-order methods designed for minimizing smooth constrained optimization problems where instead of projection onto a constraint set only a linear minimization over the set needs to be solved. Such an operation can significantly reduce the computational cost of the algorithm as in the case of nuclear-norm ball constraints.
Nonconvex SP problems have already been extensively investigated in the literature and various methods have been proposed for different settings. Namely, the vanilla projected gradient descent ascent has been shown to achieve iteration complexity $\cO(\epsilon^{-2})$ for finding an $\epsilon$-stationary of nonconvex-strongly \mot{concave} problem \cite{li2021complexity} matching the lower bound in this setting. However, this method fails to address nonconvex-concave problems, therefore, many researchers studied various smoothing \cite{zhang2020single} and acceleration \cite{ostrovskii2021efficient} techniques to ensure a convergence rate guarantee. Most of these methods are built upon the idea of solving regularized subproblems inexactly resulting in a multi-loop method that can pose implementation challenges. To avoid such a complication, single-loop primal-dual methods have gained popularity. Recently, a unified projected primal-dual  gradient using the regularization technique has been studied \cite{xu2023unified}. The proposed method can achieve the complexities of $\cO(\epsilon^{-2})$ and $\cO(\epsilon^{-4})$ for nonconvex-strongly concave and nonconvex-concave settings. While the studies on primal-dual projected gradient-based methods are rich and fruitful, less is known about projected-free methods for SP problems. With only a handful of studies focusing on (strongly) convex-(strongly) concave problems \cite{gidel2017frank,roy2019online,chen2020efficient}, no projection-free method for solving nonconvex SP problems is currently available, to the best of our knowledge. Moreover, recently there has been a growing interest in developing algorithms for solving bilevel optimization problems \cite{pedregosa2016hyperparameter,ji2021bilevel,franceschi2018bilevel} which subsumes SP problems as a special case. However, it is important to note that most of the existing methods for solving bilevel optimization problems consider an additional assumption on the lower-level objective function satisfying strong convexity or Polyak Lojasiewicz (PL) condition \cite{ji2021bilevel,abolfazli2023inexact,liu2022bome}. These assumptions in the context of SP problems translate into strong concavity or PL condition for $\cL(x,\cdot)$ which cannot handle merely concave setting considered in this paper.
Therefore, in this paper, we focus on developing projection-free methods for solving \eqref{main}. In particular, our main research question is
\begin{center}{\it Can we develop a one-sided or fully projection-free primal-dual method with a convergence rate guarantee for solving nonconvex-(strongly) concave SP problem in \eqref{main}?}\end{center}
Our response to this question is affirmative, and next we outline the key contributions of our work. 
\subsection{Contribution}
In this paper, we consider a broad class of constrained SP problems where the objective function is nonconvex-concave and smooth. Motivated by the pressing need for developing projection-free algorithms from an application perspective as well as the lack of efficient methods with theoretical guarantees, we propose primal-dual projection-free methods for solving nonconvex-concave SP problems. Using regularization and nested approximation techniques we first develop a single-loop primal-dual method that solely uses linear minimization oracle (LMO) to find a direction for both primal and dual steps. Next, assuming that the projection oracle (PO) is available for the maximization component of the objective function, we develop a one-sided projection-free method with regularized projection gradient ascent. Our main theoretical contributions are summarized below.\\[2mm]
\textbf{1)} Assuming the constraint set in the maximization component is strongly convex, we show that our proposed primal-dual conditional gradient method that employs LMOs for both variables achieves 
    an $\epsilon$-stationary solution within $\mathcal{O}(\epsilon^{-6})$ iterations.  To the best of our knowledge, this is the first complexity result for primal-dual methods with only LMOs for nonconvex-concave SP problems. Moreover, when the objective function is nonconvex-strongly concave, our proposed method achieves $\epsilon$-primal and $\epsilon$-dual gaps within $\cO(\epsilon^{-4})$ and  $\cO(\epsilon^{-2})$ iterations, respectively.  
 
\textbf{2)} Considering the case where projection onto the constraint set of the maximization component is easy to compute, we demonstrate that our one-sided projection-free method 
    achieves an $\epsilon$-stationary solution within $\mathcal{O}(\epsilon^{-4})$ iterations matching the best-known results for single-loop projection-based primal-dual methods. For nonconvex-strongly concave settings, the iteration complexity improves to $\cO(\epsilon^{-2})$. 


\subsection{Related work}
SP problems have been subject to extensive studies, primarily in the context of the convex-concave setting \cite{nemirovski2004prox,ouyang2015accelerated,chambolle2016ergodic,hamedani2021primal}. However, the practical applications of SP problems in nonconvex settings, particularly in the context of robust optimization and adversarial learning have sparked a demand for a thorough investigation and further exploration. Next, we briefly discuss the state-of-the-art methods in the literature for nonconvex-(strongly) concave problems. We devise our review into two sections of projected gradient-based and projection-free-based algorithms. 
{\bf Projected gradient based algorithms:} Various algorithms have been proposed for nonconvex-concave setting under different assumptions \cite{lin2020near,boct2020alternating, ostrovskii2021efficient,zhang2022sapd+,10156371,baharlouei2019r}. Authors in \cite{rafique2018weakly} proposed a double loop algorithm to solve nonsmooth weakly convex-concave with a complexity of $\mathcal{\tilde O}(\epsilon^{-6})$.  
In the pursuit of more efficient solutions, Triple-loop inexact proximal point methods with the faster convergence rate of $\mathcal{\tilde O}(\epsilon^{-3})$ are developed in \cite{thekumparampil2019efficient,kong2021accelerated}. Additionally, single-loop algorithms for addressing nonconvex-concave problems were  introduced by \cite{lin2020gradient} and \cite{lu2020hybrid} where they achieved the iteration complexities of $\mathcal{ \tilde O}(\epsilon^{-6})$ and $\mathcal{  \tilde O}(\epsilon^{-4})$, respectively.
More recently, authors in \cite{mahdavinia2022tight} proposed optimistic gradient descent ascent and extra-gradient methods and obtained a convergence rate of $\cO(\epsilon^{-6})$. 
Employing Nesterov's smoothing technique \cite{nesterov2005smooth}, the iteration complexity of \mot{$\mathcal{  O}(\epsilon^{-4})$} was obtained by \cite{zhang2020single} for general nonconvex-concave problem. In addition, \cite{zhang2022primal} achieved the same convergence rate with linear coupled equality or inequality constraints. 
Recently, \cite{xu2023unified} proposed an alternating gradient projection (AGP) algorithm where it utilizes gradient projection for updating $x$ and $y$ and it achieves the suboptimal complexity of $\mathcal{O}(\epsilon^{-4})$. All the aforementioned methods rely on having knowledge about projecting onto sets.
\\
{\bf Projection-free based algorithms:} There has been a surge of interest in the application of projection-free methods within the  machine learning and  optimization problems \cite{lan2013complexity,garber2015faster,lan2017conditional,lei2019primal}. 
Despite the widespread application of projection-free methods such as the 
Frank Wolfe (FW) algorithm, there have been limited studies that examine their use in the context of SP problems \cite{ abernethy2017frank,he2015semi,suggala2020follow,gidel2017frank,roy2019online,chen2020efficient,kolmogorov2021one}. In \cite{he2015semi} a projection-free algorithm for convex-concave setting is proposed and achieved the complexity of $ \mathcal{O}(\epsilon^{-2})$ LMO calls. This algorithm was subsequently refined by \cite{suggala2020follow} with the development of a  parallelizable algorithm. 
Gidel et al. \cite{gidel2017frank} presented the first convergence result for an FW-type algorithm for nonbilinear SP problems. Assuming that the SP solution lies within the interior of constraint sets, the proposed method achieves a linear rate for strongly convex-strongly concave SP problems. Moreover, the method achieves the same complexity for convex-concave setting, assuming that the constraint sets are strongly convex and gradients of the objective function are uniformly lower-bounded. 
Roy et al. \cite{roy2019online} advanced a projection-free algorithm for the same setting and introduced a variant of the FW algorithm that is particularly suited for the nonstationary stochastic SP problems. Additionally, the authors in \cite{chen2020efficient} proposed a four-loop projection-free algorithm which can be viewed as a variant of Mirror-Prox in which proximal subproblems are solved inexactly using conditional gradient sliding \cite{lan2017conditional} for convex-strongly concave problems. Considering a nonconvex-concave setting, 
Nouiehed et al. \cite{nouiehed2019solving} 
proposed a multi-loop one-sided projection-free primal-dual method that achieves the complexity of $ \tilde{\mathcal O}(\epsilon^{-3.5})$ to find an $\epsilon$-game stationary solution. Notably, in their method, at each iteration $k$ the minimization subproblem needs to be computed over the set $\{x\in\mathcal X\mid x+x_k\in X,~\|x\|\leq 1\}$. Kolmogorov et al. \cite{kolmogorov2021one} developed a two-loop one-sided projection-free primal-dual method for solving bilinear convex-concave SP problems where LMO is solely used for computing the minimization variable. The proposed method achieves an iteration complexity of $\cO(\epsilon^{-1})$. The summary of related work and their comparison are presented in Table \ref{label}. 
\begin{table*}[t!]
\caption{Comparative analysis of optimization schemes for convex-strongly concave (C-SC), convex-concave (C-C), nonconvex-strongly concave (NC-SC) and nonconvex-concave (NC-C) problems, featuring different oracles: Linear Minimization Oracle (LMO) and Projection/Proximal Oracle (PO). $\tilde O(\cdot)$  denotes $\mathcal O(\cdot)$ up to a logarithmic factor. \mot{In the column of ``additional assumption'', the abbreviation SC stands for strongly convex.} $^*$ The complexities are presented for $\epsilon$-primal and $\epsilon$-dual, respectively. }
\label{label}
\renewcommand{\arraystretch}{1.5} 
\resizebox{1\linewidth}{!}{
\begin{tabulary}{\linewidth}{|c|c|c|c|c|c|c|}
\hline
Setting & Ref  & Non-bilinear & Oracle &  \# of loops &  Addt'l Assump. & Rate \\
\hline
\multirow{1}{*}{ C-SC}
& Chen et al. \cite{chen2020efficient} & \cmark& LMO-LMO & 4 & None & $\tilde{\mathcal{O}}(\epsilon^{-2})$ \\ \cline{2-7}  \hline
\multirow{2}{*}{ C-C}
   & Gidel et al. \cite{gidel2017frank} & \cmark & LMO-LMO & 1 & $X$ and $Y$ are SC sets & ${\mathcal{O}}(\log \epsilon^{-1})$ \\ \cline{2-7}
   & Kolmogorov et al. \cite{kolmogorov2021one} & \xmark &LMO-PO & 2 & X is SC set & ${\mathcal{O}}(\epsilon^{-1})$\\ \hline
  \multirow{3}{*}{ NC-SC}
 & Xu et al. \cite{xu2023unified}  & \cmark &PO-PO & 1  &  None& $\mathcal{O}(\epsilon^{-2})$  \\ \cline{2-7}
  & \textbf{CG-RPGA} (\mot{\texttt{Theorem \ref{thm:LMO-LMO}}})  & \cmark & LMO-PO & 1 & \centering None & $\mathcal{O}(\epsilon^{-2})$  \\ \cline{2-7}
  & \textbf{R-PDCG} (\mot{\texttt{Theorem \ref{thm:C-SC:LMO-LMO}}}) &\cmark & LMO-LMO & 1 & \centering $Y$ is SC set & $\mathcal{O}(\epsilon^{-4}),\cO(\epsilon^{-2})^*$  \\ \hline
\multirow{3}{*}{ NC-C}
 & Xu et al. \cite{xu2023unified}  & \cmark &PO-PO & 1  &  None& $\mathcal{O}(\epsilon^{-4})$  \\ \cline{2-7}
  & \textbf{CG-RPGA} (\mot{\texttt{Theorem \ref{thm:LMO-PO}}})  & \cmark & LMO-PO & 1 & \centering None & $\mathcal{O}(\epsilon^{-4})$  \\ \cline{2-7}
  & \textbf{R-PDCG} (\mot{\texttt{Theorem \ref{thm:C-SC:LMO-PO}}}) &\cmark & LMO-LMO & 1 & \centering $Y$ is SC set & $\mathcal{O}(\epsilon^{-6})$  \\ \hline
\end{tabulary}
}
\end{table*}

\subsection{Motivating Examples}
\begin{example}\label{example1} \textbf{Robust Multiclass Classification:} Consider a multiclass classification task for a given training dataset $\cD^{tr}=\{(\ba_i,b_i)\}_{i=1}^n$ where
 $\ba_i \in \mathbb{R}^d$ denotes the feature vector of the $i$-th sample and $b_i \in \{1,\hdots,k\}$ is the corresponding label. The goal is to find a linear predictor with parameter $\btheta=[\theta_1^\top, \theta_2^\top, \hdots \theta_k^\top  ] \in \reals ^{k\times d}$ that is able to predict the labels for an unlabeled dataset $\cD^{test}=\{\ba_j\}_{i=1}^{n'}$. In particular, given a new data point $\hat \ba\in \cD^{test}$, the corresponding label can be predicted via $\hat b = \arg \max_{j=1}^k \theta_j^\top \hat \ba$. It has been shown that \cite{dudik2012lifted} the linear predictor $\btheta$ can be found by minimizing a multinomial logistic loss function, i.e., $\ell_i(\btheta)= \log \big ( 1+\exp(\sum_{j\neq b_i} (\theta_j^{\top} \ba_i - \theta_{b_i}^\top \ba_i))\big)$, over a nuclear norm constraint, i.e., $X=\{\btheta\mid \norm{\btheta}_* \leq r \}$ for some $r>0$. On the other hand, in many applications where safety and reliability are crucial distributionally robust optimization offers a promising approach for a more robust and reliable prediction that can be used in classification tasks to capture uncertainty in data distribution \cite{namkoong2017variance,namkoong2016stochastic}. This can be achieved by considering a worst-case performance of non-parametric uncertainty set on the underlying data distribution. This leads to a min-max formulation where the maximization is taking over an uncertainty set $Y$. For instance, $Y=\{y\in\Delta_n : V(y,\frac{1}{n}\ones_n)\leq \rho\}$ is considered in different papers such as \cite{namkoong2016stochastic}, where {$\Delta_n$} is an $n$-dimensional simplex set, and $V(Q,P)$ denotes the divergence measure between two sets of probability measures $Q$ and $P$.
 Therefore, distributionally robust multiclass classification can be formulated as the following SP problem \cite{chen2020efficient}:
\begin{align}
    \min_{\btheta\in X}\max_{y=[y_i]_{i=1}^n\in Y}~\sum_{i=1}^n y_i\ell_i(\btheta).
\end{align}
This problem can be readily cast as \eqref{main} by defining $\cL(\btheta,y)=\sum_{i=1}^ny_i\ell_i(\btheta)$. 
\mt{The constraint of the maximization is the intersection of simplex set and divergence measure constraints. Indeed, one can relax the simplex constraint using the splitting technique and Fenchel duality. The resulting equivalent saddle point problem has a maximization constraint of $Y=\{y:V(y,\frac{1}{n}\mathbf 1_n)\leq \rho\}$.
which is only described by the divergence measure constraint. In some popular examples such as the Pearson Chi-square divergence, i.e., $V(y,\mathbf{1}_n/n)=\|ny-\mathbf{1}_n\|^2$, $Y$ satisfies the assumption of strongly convex constraint set.}
\end{example}
\begin{example}\label{dictionary}\textbf{Dictionary Learning:} 
Dictionary learning aims to acquire a succinct representation of the input data extracted from a large dataset. Given an input dataset  $\bA=[a_1,\hdots,a_n]\in\reals^{m\times n}$, our primary goal is to find a dictionary $\bD =[d_1,\hdots,d_p] \in \reals^{m\times p}$ that can accurately approximate the data through linear combinations. Dictionary learning problem can be formulated in nonconvex optimization form \cite{rakotomamonjy2013direct}:
\begin{align}\label{mindic}
\min_{\bD\in \reals^{m\times p}} \min_{\bC\in \reals^{p\times n}} \norm{\bA-\bD\bC}_F^2, \quad \hbox{s.t.} \quad \norm{\bC}_*\leq r;~\norm{d_j}_2\leq 1, \forall j\in\{1,\hdots,p\}, 
\end{align}
where $\bC\in\reals^{p\times n}$ denotes the coefficient matrix. 
In various learning scenarios, such as lifelong learning, the learner engages in a sequential series of tasks, aiming to accumulate knowledge from previous tasks in order to enhance performance in subsequent tasks. Suppose that we have learned a dictionary $\bD\in\reals^{m\times p}$ and its corresponding coefficient matrix $\bC\in\reals^{p\times n}$ for the dataset $\bA$. Given a new dataset $\bA'\in\reals^{m\times n'}$, we aim to refine a new dictionary $\bD'\in\reals^{m\times q}$ such that it still provides an accurate representation of old dataset $\bA$ with the learned coefficient matrix $\bC$.  Consequently, this problem \ref{mindic} can be written as the following non-convex  problem with a convex nonlinear constraint:
\begin{align}\label{eq:dic}
\min_{\bD'} \min_{\bC'} \norm{\bA'-\bD'\bC'}_F^2, \ \hbox{s.t.} \ \|\bA-\bD'\tilde\bC\|_F^2\leq \delta; ~ \|{\bC'}\|_*\leq r; ~\norm{d_j'}_2\leq 1, \forall j\in\{1,\hdots,q\} ,
\end{align}
where $\delta>0$ denotes the user-predefined accuracy for the representation of the old dataset and $\tilde\bC\in\reals^{q\times n}$ is the extension of $\bC$ by adding $q-p$ columns of zeros. This problem can be formulated equivalently as an SP problem \eqref{main} using a Lagrangian duality. In particular, it can be cast as \eqref{main} by setting $\cL((\bD',\bC'),y)=\|{\bA'-\bD'\bC'}\|_F^2+y(\|{\bA-\bD'\tilde \bC}\|_F^2- \delta)$, $X=\{(\bD',\bC')\mid \|{\bC'}\|_*\leq r, ~\|{d_j'}\|_2\leq 1, \forall j\in\{1,\hdots,q\}\}$, and $Y=\reals_+$. Note that $\cL$ is nonconvex-concave and smooth. Moreover, due to the existence of a Slater point for problem \eqref{eq:dic} a dual bound, i.e., $\norm{y}\leq B$ for some $B>0$, can be constructed efficiently as suggested in \cite{nedic2008convex,hamedani2021primal} to ensure boundedness of set $Y$. 
\end{example}
Apart from min-max formulation, the nuclear-norm constraint in the above examples makes the problem computationally demanding for projection-based algorithms. Therefore, it is imperative to address this challenge by developing and utilizing projection-free methods, which can effectively overcome the computational limitations associated with these problems. 

\section{Preliminaries}
In this section, we outline the notations and required assumptions that we need for the analysis of our proposed methods as well as some important definitions. 

{\bf Notations.} Given two sets $X$ and $Y$, $X\times Y$ denotes the Cartesian product of $X$ and $Y$. The lower-case $x$ and $y$, as well as their accented variants, are reserved for vectors in the space of $\cX$ and $\cY$, respectively; moreover, the letter $z$ will be used to indicate the concatenation of those vectors, i.e.,  $z\triangleq [x^\top~y^\top]^\top$. The upper-case $X$ and $Y$ are reserved for constraint sets; moreover, $Z$ will be used to indicate their Cartesian product.
Given a differentiable function $\cL(x,y)$, $\grad_x\cL(x,y)$ and $\grad_y\cL(x,y)$ denote the partial derivatives of $\cL$ with respect to $x$ and $y$, respectively. Given a set $X$, $\ind_X(\cdot)$ denotes the indicator function. Given a finite-dimensional normed vector space, $\norm{\cdot}_p$ for some $p\geq 1$ denotes the $\ell_p$-norm. 

\subsection{Oracle Description}
In this paper, we will utilize the following oracles to acquire first-order information and solution of structured subproblems for the proposed algorithms in different settings. 
\begin{itemize}
    \item Given $(x,y)$, first-order oracle (FO) returns $\grad_x\cL(x,y)$ and $\grad_y\cL(x,y)$.
    \item Given a vector $\bar x\in\cU$ and a convex and compact set $U\subseteq \cU$, linear minimization oracle (LMO) returns a solution of $\min_{x\in U}\fprod{\bar x,x}$. 
    \item Given a vector $\bar x\in\cU$ and a convex and compact set $U\subseteq \cU$, projection oracle (PO) returns the solution of $\min_{x\in U} \norm{x-\bar x}_\cU$.
\end{itemize}

Based on the above oracles, we define some gap functions to measure the quality of the solution. More specifically,  when the proposed algorithm uses LMO we use the following definition. 

\begin{definition}[Gap function for LMO]\label{def:LMO-gap}
The stationary gap function $\cG_X:Z\to \reals$ for the minimization part of problem \eqref{main} is defined as $\cG_X(\mt{\bar x,\bar y})\triangleq \sup_{x\in X}\fprod{\grad_x\cL(\bar x,\bar y),\bar x - x}$. Similarly, for the maximization part of problem \eqref{main} the stationary gap function $\cG_Y:Z\to\reals$ is defined as $\cG_Y(\bar x,\bar y)\triangleq \sup_{y\in Y}\fprod{\grad_y\cL(\bar x,\bar y),y-\bar y}$. Moreover, we define $\cG_Z(\bar x,\bar y)\triangleq \cG_X(\bar x,\bar y)+\cG_Y(\bar x,\bar y)$. 
\end{definition}

In the case where the proposed method uses PO for the maximization part of the objective function of \eqref{main}, we use the same gap function $\cG_X$ for the minimization component while employing the following gap function for the maximization component. 

\begin{definition}[Gap function for PO]\label{def:PO-gap}
The stationary dual gap function $\cG_Y:Z\to\reals$ corresponding to the maximization part of problem \eqref{main} is defined as $\cG_Y(\bar x,\bar y)\triangleq \tfrac{1}{\sigma}\norm{y-\cP_Y(y+\sigma \grad_y\cL(\bar x,\bar y))}_\cY$. Moreover, we define $\cG_Z(\bar x,\bar y)\triangleq \cG_X(\bar x,\bar y)+\cG_Y(\bar x,\bar y)$ where $\cG_X$ is defined in Definition \ref{def:LMO-gap}. 
\end{definition}

\begin{definition}\label{def:epsilon-stationary}
For a given gap function $\cG\mot{_{Z}}:Z\to\reals$, a point $(\bar x,\bar y)\in Z$ is an $\epsilon$-stationary of problem \eqref{main} if $\cG\mot{_{Z}}(\bar x,\bar y)\leq \epsilon$. 
\end{definition}

\begin{remark}
As indicated in the above definitions, since we use different oracles for the maximization component of the objective function we need to employ different gap functions for the dual iterates. With a slight abuse of notation, we used $\cG_Y$ when using both LMO and PO for the constraint set $Y$. The reason is that an $\epsilon$-solution in terms of both definitions implies an $\epsilon$-game stationary solution. More specifically, if $\mathcal{G}_Z(\bar x,\bar y)\leq \epsilon$ for some $(\bar x,\bar y)\in Z$, then 
\begin{align*}
    -&\grad_x\cL(\bar x,\bar y)\in \mot{\partial}\ind_{X}(\bar x)+u,\quad \mbox{s.t.} \ \|u\|_\cX\leq \cO(\epsilon),\\ 
 &\grad_y\cL(\bar x,\bar y)\in \mot{\partial}\ind_{Y}(\bar y)+v,\quad\ \mbox{s.t.}\  \|v\|_\cY\leq\cO(\epsilon).
\end{align*}
Therefore, we consider the above definition as a unified notion of $\epsilon$-stationary solution similar to \cite{xu2023unified}. We refer the reader to \cite{jin2020local,li2022nonsmooth} for the details of the relation between a game stationary solution and other notations of stationary solution. 
\end{remark}


\begin{definition}
Let $(\cU,\norm{\cdot}_\cU)$ be a finite-dimensional normed vector space. 
A convex set $\cK\subset \cU$ is $\alpha$-strongly convex with respect to $\norm{\cdot}_\cU$ if for any $u,v\in \cK$, $w\in \cU$, and $\gamma\in [0,1]$, 
\begin{equation*}
    \gamma u+(1-\gamma)v+\gamma(1-\gamma)\frac{\alpha}{2}\norm{u-v}_\cU^2w\in \cK,\quad \hbox{s.t.} \quad \norm{w}_\cU=1.
\end{equation*}
\end{definition}
We next state our main assumptions considered in the paper. 

\subsection{Assumptions}

\begin{assumption}\label{assump1}
(I) For any $y\in Y$, $\cL(\cdot,y)$ is continuously differentiable with a Lipschitz continuous gradient, i.e, there exists $L_{xx}\geq 0$ and $L_{yx}>0$ such that for any $x,\bar x\in X$ and $y, \bar y\in Y$ the followings hold
\begin{align*}
  &\|\nabla_x \cL(x,y)-\nabla_x\cL(\bar x,\bar y)\|_{\cX^*}\leq L_{xx}\|x-\bar x\|_\cX+L_{yx}\|y-\bar y\|_\cY.
\end{align*}
(II) For any $x\in X$, $\cL(x,\cdot)$ is concave and continuously differentiable with a Lipschitz continuous gradient, i.e, there exists $L_{yy}\geq 0$ and $L_{yx}>0$ such that for any $x,\bar x\in X$ and $y, \bar y\in Y$ the followings hold
\begin{align*}
  &\|\nabla_y \cL(x,y)-\nabla_y\cL(\bar x,\bar y)\|_{\cY^*}\leq L_{yx}\|x-\bar x\|_\cX+L_{yy}\|y-\bar y\|_\cY.
\end{align*}
(III) $X\subseteq\cX$ and $Y\subseteq \cY$ are convex and compact sets with diameters $D_X$ and $D_Y$, respectively, i.e., $D_X\triangleq \sup_{x,\bar{x}\in X}\norm{x-\bar x}_\cX$ and $D_Y\triangleq \sup_{y,\bar{y}\in Y}\norm{y-\bar y}_\cY$.
\end{assumption}

In the case where LMO is available for both minimization and maximization components of \eqref{main}, we will consider the following additional assumption. 

\begin{assumption}\label{assump2}
$Y\subseteq \cY$ is $\alpha$-strongly convex for some $\alpha>0$.
\end{assumption}
\begin{remark}
Strongly convex sets arise in many applications and there have been several studies characterizing various examples \cite{garber2015faster}. In particular, in many of these examples, the corresponding LMO admits a closed-form solution or can be solved efficiently. Here we present two interesting examples: \\
(1) Let $\cB_f(r)\triangleq \{x\in\cX\mid f(x)\leq r\}$ where $r>0$ and $f:\cX\to\reals_+$ is a $\mu$-strongly convex and $L$-smooth function. Then, the set $\cB_f(r)$ is strongly convex with modulus $\alpha = \mu/\sqrt{2Lr}$. In particular, this example includes $\ell_p$-norm ball when $f(x)=\norm{x}_p^2$ for $p\in (1,2]$.\\
(2) For a given matrix $A\in\reals^{n\times m}$, let the singular values be denoted by $\{\sigma_i(A)\}_{i=1}^q$ where $q=\min(n,m)$. Schatten $\ell_p$ ball for $p\in (1,2]$, i.e., $\cB_{S(p)}(r)\triangleq \{A\in\reals^{n\times m}\mid (\sum_{i=1}^q\sigma_i(A)^p)^{1/p}\leq r\}$, is a strongly convex set with modulus $\alpha = (p-1)q^{\frac{1}{2}-\frac{1}{2}}/r$ (For details and more examples see \cite{garber2015faster}). 
\end{remark}

\section{Proposed Methods}
In this section, we propose our algorithms based on a primal-dual conditional-gradient approach for addressing problem \eqref{main}. As previously mentioned in section \ref{sec:intro}, we assume that the minimization component of problem \eqref{main} includes a constraint set $X$, which allows for an efficient LMO, whereas the associated PO may involve computationally expensive procedures. However, with regard to the maximization component of the objective function, we consider two main scenarios based on the Oracle assumption: $(i)$ when an LMO is available; and $(ii)$ when a PO is available.

Note that problem \eqref{main} can be viewed as a minimization problem $\min_{x\in X} f(x)$ where $f(x)\triangleq \max_{y\in Y} \cL(x,y)$. A naive implementation of a conditional gradient method, such as the Frank Wolfe method, has two main challenges: Firstly, evaluation of the objective function and/or its first-order information requires exact evaluation of $y^*(x)\in\argmax_{y\in Y}\cL(x,y)$ at each iteration given $x\in X$ which may not be possible. Secondly, since the objective function $\cL(x,\cdot)$ is concave for any $x\in X$, it implies that $f(x)$ is a nonsmooth and nonconvex function. In fact, it can be shown that $\grad_x\cL(x,y^*(x))\in \partial f(x)$, for any $y^*(x)\in \argmax_{y\in Y} \cL(x,y)$. 
To overcome the later challenge, a typical approach is to add a regularization term to the objective function to provide a smooth approximation for the function $f$. More specifically, one can add a regularization term $-\frac{\mu}{2}\norm{y-y_0}_\cY^2$ for some $y_0\in Y$ to the objective function leading to the following regularized SP problem
\begin{align*}
\min_{x\in X}\max_{y\in Y}~\cL_\mu(x,y)\triangleq \cL(x,y)-\frac{\mu}{2}\norm{y-y_0}_\cY^2.
\end{align*}
Note the objective function in the above problem is strongly concave and smooth in $y$, hence, for any given $x\in X$ the corresponding maximizer $y^*_\mu(x)=\argmax_{y\in Y}\cL_\mu(x,y)$ is uniquely defined. 
Moreover, to bypass the requirement for evaluating an exact solution at each iteration, one can provide an increasingly accurate approximated solution for $y^*_\mu(x)$, however, this leads to a two-loop method that still requires an excessive computational cost at each iteration to solve a subproblem inexactly \cite{zhao2022accelerated}. Moreover, the performance of such inexact methods is generally sensitive to the choice of the subproblem's parameters. Therefore, to resolve these issues, we propose \emph{single-loop} and \emph{easy-to-implement} inexact projection-free primal-dual algorithms.


\begin{algorithm}[htb]
   \caption{Regularized Primal-dual Conditional Gradient (R-PDCG) method}
   \label{alg:alg1}
\begin{algorithmic}
   \STATE {\bfseries Input:} $x_0\in X$, $y_0\in Y$, $\mu>0$, \mot{$\{\tau_k\}_k \subseteq \mathbb R_{+}$}
   \FOR{$k= 0,\hdots, K-1$}
    \STATE $s_k\gets \argmin_{x\in X}\fprod{\grad_x\cL(x_k,y_k),x}$
   \STATE $x_{k+1}\gets \tau_k s_k+(1-\tau_k)x_k$
   \STATE $p_k\gets \argmax_{y\in Y}\fprod{\grad_y\cL(x_k,y_k)-\mu(y_k-y_0),y}$
   \STATE $y_{k+1}\gets \sigma_kp_k+(1-\sigma_k)y_k$
   \ENDFOR
\end{algorithmic}
\end{algorithm}

In particular, when an LMO is available for both primal and dual steps, we develop an alternating conditional gradient method where at each iteration a Frank Wolfe step is taken with respect to the primal variable via the direction $\grad_x\cL(x_k,y_k)$ followed by a Frank Wolfe step for the regularized maximization problem with respect to the dual variable. The outline of our proposed method is presented in Algorithm \ref{alg:alg1}. 
Moreover, considering the scenario where the projection onto set $Y$ is efficiently computable, we propose a one-sided projection-free primal-dual method. In particular, at each iteration, similar to the previous algorithm we perform a Frank Wolfe step with respect to the primal variable. Then to update the dual variable, instead of taking an FW-type update, we take a step of projected gradient ascent with respect to the regularized objective function as follows
\begin{equation*}
y_{k+1}\gets \cP_Y\Big(y_k+\sigma_k\big(\grad_y\cL_\mu(x_k,y_k)\big)\Big).
\end{equation*}
The outline of these steps is presented in Algorithm \ref{alg:alg2}.
\begin{algorithm}[htb]
   \caption{Conditional Gradient with Regularized Projected Gradient Ascent (CG-RPGA)}
   \label{alg:alg2}
\begin{algorithmic}
   \STATE {\bfseries Input:} $x_0\in X$, $y_0\in \mt{Y}$, \mot{$\mu>0,  \{\tau_k, \sigma_k\}_k \subseteq \mathbb R_{+}$}
   \FOR{\mot{$k= 0,\hdots, K-1$}}
    \STATE $s_k\gets \argmin_{x\in X}\fprod{\grad_x\cL(x_k,y_k),x}$
   \STATE $x_{k+1}\gets \tau_k s_k+(1-\tau_k)x_k$
   \STATE $y_{k+1}\gets \cP_Y\Big(y_k+\sigma_k\big(\grad_y\cL(x_k,y_k)-\mu (y_k-y_0)\big)\Big)$
   \ENDFOR
\end{algorithmic}
\end{algorithm}
\section{Convergence Analysis of R-PDCG}\label{sec:R-PDCG}
In this section, we study the convergence properties of
Algorithms \ref{alg:alg1} and \ref{alg:alg2}. First, in the next lemma, we provide a one-step analysis of Algorithm \ref{alg:alg1} by providing an upper bound on the reduction of the objective function in terms of the consecutive iterates.

\begin{lemma}\label{one step lemma}
Suppose Assumptions \ref{assump1} and \ref{assump2} hold and let $\{(x_k,y_k)\}_{k\geq 0}$ be the sequence generated by Algorithm \ref{alg:alg1} with step-sizes $\tau_k=\tau>0$ and $\{\sigma_k\}_{k\geq 0}$, and parameter $\mu>0$. Define $\sigma_k\triangleq \min\{1,\frac{\alpha}{4(L_{yy}+\mu)}\norm{\grad_y\cL_\mu(x_k,y_k)}_{\cY^*}\}$ and $H_k\triangleq \cL_\mu(x_k,y^*_{\mu}(x_k))-\cL_\mu(x_k,y_k)$ for any $k\geq 0$. Then for any $k\geq 0$
\begin{align}
&H_{k+1}\leq \max\left\{\frac{1}{2},1-\frac{\alpha\sqrt{\mu}}{8\sqrt{2}(L_{yy}+\mu)}\sqrt{H_k}\right\}H_k + \cE(\tau), \label{eq:Hk-telescope} 
\end{align}
where 
$\cE(\tau)\triangleq  L_{yx}\tau D_YD_X+2L_{xx}\tau^2D_X^2$.
\end{lemma}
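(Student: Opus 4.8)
The plan is to separate the one-step change of the regularized dual gap into the progress made by the dual Frank--Wolfe step with the primal iterate frozen at $x_k$, and the perturbation caused by the primal update $x_k\to x_{k+1}$. Writing $\phi_\mu(x)\triangleq\cL_\mu(x,y^*_\mu(x))$ and inserting $\pm\phi_\mu(x_k)$ and $\pm\cL_\mu(x_k,y_{k+1})$, I would use the decomposition
\begin{align*}
H_{k+1}&=\underbrace{\big(\phi_\mu(x_k)-\cL_\mu(x_k,y_{k+1})\big)}_{\widetilde H_k}\\
&\quad+\big(\phi_\mu(x_{k+1})-\phi_\mu(x_k)\big)+\big(\cL_\mu(x_k,y_{k+1})-\cL_\mu(x_{k+1},y_{k+1})\big).
\end{align*}
The goal is then to show that the first term contracts, $\widetilde H_k\le\max\{\tfrac12,1-\tfrac{\alpha\sqrt\mu}{8\sqrt2(L_{yy}+\mu)}\sqrt{H_k}\}H_k$, and that the remaining primal-perturbation terms are bounded by $\cE(\tau)$; adding the two gives \eqref{eq:Hk-telescope}.

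The core step is bounding $\widetilde H_k$, a one-step Frank--Wolfe analysis for maximizing the $\mu$-strongly concave and $(L_{yy}+\mu)$-smooth function $g(\cdot)\triangleq\cL_\mu(x_k,\cdot)$ over the $\alpha$-strongly convex set $Y$. Let $c\triangleq\grad g(y_k)=\grad_y\cL_\mu(x_k,y_k)$, $r_k\triangleq\norm{p_k-y_k}_\cY$, and $G_k\triangleq\fprod{c,p_k-y_k}$. First, concavity and the optimality of $p_k$ for the linear oracle give $G_k\ge\fprod{c,y^*_\mu(x_k)-y_k}\ge H_k$. Next I would invoke Assumption \ref{assump2}: applying the strong-convexity-of-$Y$ inequality with $\gamma=\tfrac12$, $u=p_k$, $v=y_k$, and $w$ chosen as a unit vector attaining the dual norm $\norm{c}_{\cY^*}$, the optimality of $p_k$ yields the sharpened gap bound $G_k\ge\tfrac{\alpha}{4}\norm{c}_{\cY^*}r_k^2$. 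Combining the smoothness descent $g(y_{k+1})-g(y_k)\ge\sigma_kG_k-\tfrac{L_{yy}+\mu}{2}\sigma_k^2r_k^2$ with the step-size $\sigma_k=\min\{1,\tfrac{\alpha\norm{c}_{\cY^*}}{4(L_{yy}+\mu)}\}$ and the sharpened bound shows, in both the $\sigma_k=1$ and $\sigma_k<1$ cases, that $g(y_{k+1})-g(y_k)\ge\min\{\tfrac12,\tfrac{\alpha\norm{c}_{\cY^*}}{8(L_{yy}+\mu)}\}G_k$. Finally, strong concavity converts the gradient norm into $\sqrt{H_k}$: from $H_k\le\fprod{c,y^*_\mu(x_k)-y_k}\le\norm{c}_{\cY^*}\norm{y^*_\mu(x_k)-y_k}_\cY$ together with the quadratic-growth bound $\norm{y^*_\mu(x_k)-y_k}_\cY\le\sqrt{2H_k/\mu}$, one obtains $\norm{c}_{\cY^*}\ge\sqrt{\mu/2}\,\sqrt{H_k}$. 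Substituting this and using $G_k\ge H_k$ gives $g(y_{k+1})-g(y_k)\ge\min\{\tfrac12,\tfrac{\alpha\sqrt\mu}{8\sqrt2(L_{yy}+\mu)}\sqrt{H_k}\}H_k$, which is exactly the claimed contraction for $\widetilde H_k$.

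For the primal-perturbation terms I would expand each to first order in $x$. Using $\phi_\mu(x_k)\ge\cL_\mu(x_k,y^*_\mu(x_{k+1}))$ (optimality of $y^*_\mu(x_k)$ at $x_k$) and the $L_{xx}$-smoothness of $\cL(\cdot,y)$ from Assumption \ref{assump1}(I), the first piece obeys $\phi_\mu(x_{k+1})-\phi_\mu(x_k)\le\fprod{\grad_x\cL(x_k,y^*_\mu(x_{k+1})),x_{k+1}-x_k}+\tfrac{L_{xx}}{2}\norm{x_{k+1}-x_k}_\cX^2$, while the descent lemma applied to $\cL(\cdot,y_{k+1})$ bounds the second piece by $-\fprod{\grad_x\cL(x_k,y_{k+1}),x_{k+1}-x_k}+\tfrac{L_{xx}}{2}\norm{x_{k+1}-x_k}_\cX^2$ (the regularizer cancels, being independent of $x$). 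Adding them, the inner products combine into $\fprod{\grad_x\cL(x_k,y^*_\mu(x_{k+1}))-\grad_x\cL(x_k,y_{k+1}),x_{k+1}-x_k}$, which by the cross-Lipschitz constant $L_{yx}$, the diameter bound $\norm{y^*_\mu(x_{k+1})-y_{k+1}}_\cY\le D_Y$, and $\norm{x_{k+1}-x_k}_\cX=\tau\norm{s_k-x_k}_\cX\le\tau D_X$ is at most $L_{yx}\tau D_YD_X$; the quadratic terms sum to at most $L_{xx}\tau^2D_X^2\le2L_{xx}\tau^2D_X^2$. Hence the primal perturbation is at most $\cE(\tau)$, and summing with the contraction of $\widetilde H_k$ establishes the lemma.

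The main obstacle is the dual Frank--Wolfe contraction: it is the interplay between the strong convexity of $Y$ (Assumption \ref{assump2}) and the strong concavity of $\cL_\mu(x_k,\cdot)$ that produces the accelerated $H_k^{3/2}$-type decrease, and making the constants line up requires both the comparison point $w$ attaining the dual norm and the two-case argument dictated by the adaptive step-size $\sigma_k$. The primal-perturbation bound is comparatively routine; the only care needed is to compare against the \emph{shifted} maximizer $y^*_\mu(x_{k+1})$, so that the mismatch between the gradient $\grad_x\cL(x_k,y_k)$ actually used by the algorithm and the maximizers appearing in $\phi_\mu$ is absorbed into the $D_Y$ factor rather than demanding a bound on the gradient norm.
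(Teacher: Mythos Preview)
Your proposal is correct and follows essentially the same route as the paper. Both arguments split $H_{k+1}$ into the dual Frank--Wolfe progress at fixed $x_k$ and the primal drift; the contraction step is identical (strong convexity of $Y$ with $\gamma=1/2$ to sharpen the FW gap, the two-case step-size argument, and the strong-concavity bound $\norm{\grad_y\cL_\mu(x_k,y_k)}_{\cY^*}\ge\sqrt{\mu H_k/2}$). The only cosmetic difference is in the primal-perturbation bound: you expand both Lipschitz inequalities around $x_k$, so the cross term involves $\grad_x\cL(x_k,y^*_\mu(x_{k+1}))-\grad_x\cL(x_k,y_{k+1})$ and yields $L_{yx}\tau D_YD_X+L_{xx}\tau^2D_X^2$, whereas the paper expands one around $x_{k+1}$ and picks up an extra $L_{xx}\tau^2D_X^2$, giving the stated $\cE(\tau)$ with the factor $2$; your bound is tighter but of course still $\le\cE(\tau)$.
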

Note that the above one-step inequality demonstrates that given the iterate sequence $\{x_k\}_{k\geq 0}$, the generated iterates $\{y_k\}_{k\geq 0}$ reduce the suboptimality of regularized objective function $\cL_\mu$ within an error bound $\cE(\tau)$. In other words, the generated iterates $\{y_k\}$ provides a progressively accurate estimation of the optimal trajectory $y_\mu^*(x_k)$ with an error depending on the primal step-size $\tau$. 
Therefore, the key idea lies in the meticulous selection of $\tau$ to prevent error growth while ensuring sufficient progress in the primal variable. To this end, in the following theorem we establish bounds on the primal and dual gaps based on the previous lemma that are connected via parameters $\tau$ and $\mu$. Subsequently, in the next corollary by carefully selecting those parameters we demonstrate a convergence rate guarantee for Algorithm \ref{alg:alg1}.
\begin{theorem}\label{thm:LMO-LMO}
Suppose Assumptions \ref{assump1} and \ref{assump2} hold and let $\{(x_k,y_k)\}_{k\geq 0}$ be the sequence generated by Algorithm \ref{alg:alg1} with step-sizes $\tau_k=\tau>0$ and $\{\sigma_k\}_{k\geq 0}$, and parameter $\mu>0$. Define $\sigma_k\triangleq \min\{1,\frac{\alpha}{4(L_{yy}+\mu)}\norm{\grad_y\cL_\mu(x_k,y_k)}_{\cY^*}\}$, then for any $K\geq 1$, there exists $t\in \{\lceil K/2 \rceil,\hdots,K-1\}$ such that $(x_t,y_t)\in X\times Y$ satisfy the following bounds
\begin{align*}
    \mathcal G_X(x_t,y_t)&\leq \frac{2(f(x_0)-f(x_K))}{\tau K} + \frac{\mu}{\tau K}D_Y^2 + \frac{(L_{xx}+L_{yx}^2/\mu)\tau}{K} D_X^2 +\frac{2\sqrt{2}L_{yx}}{\sqrt{\mu}} D_X\biggl[\\&\quad
    \sqrt{\cE(\tau)}+\frac{3\log(K+1)}{K}\max\left\{\sqrt{H_0},\frac{16(L_{yy}+\mu)}{\alpha\sqrt{\mu}}\right\} +\left(\frac{8\sqrt{2}(L_{yy}+\mu)\cE(\tau)}{\alpha\sqrt{\mu}}\right)^{1/3}\biggr],\\
    \mathcal G_Y(x_t,y_t)&\leq \frac{36c_\mu}{(K+4)^2}\max\left\{H_0,\frac{256(L_{yy}+\mu)^2}{\alpha^2\mu}\right\}+c_\mu\cE(\tau)\nonumber \\ &\quad+\left(\frac{8\sqrt{2}(L_{yy}+\mu)\cE(\tau)}{\alpha\sqrt{\mu}}\right)^{2/3}c_\mu+\mu D_Y^2.
\end{align*}
where $c_\mu\triangleq 2+L_{yy}/\mu$ and $\cE(\tau)$ is defined in Lemma \ref{one step lemma}. 
\end{theorem}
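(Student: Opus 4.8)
The plan is to control the two gaps through separate potentials: the smoothed objective $f_\mu(x)\triangleq\max_{y\in Y}\cL_\mu(x,y)$ for the primal variable and the dual suboptimality $H_k=\cL_\mu(x_k,y_\mu^*(x_k))-\cL_\mu(x_k,y_k)$ for the dual variable, with Lemma~\ref{one step lemma} supplying the link between them. The three structural facts I would rely on, beyond Lemma~\ref{one step lemma}, are: (i) since $\cL_\mu(x,\cdot)$ is $\mu$-strongly concave, $f_\mu$ is differentiable with $\nabla f_\mu(x)=\nabla_x\cL(x,y_\mu^*(x))$ and has $(L_{xx}+L_{yx}^2/\mu)$-Lipschitz gradient; (ii) quadratic growth gives $\norm{y_k-y_\mu^*(x_k)}_\cY^2\le \tfrac{2}{\mu}H_k$; and (iii) $0\le f-f_\mu\le \tfrac{\mu}{2}D_Y^2$.

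\textbf{Step 1 (solving the scalar recursion).} First I would turn the one-step inequality \eqref{eq:Hk-telescope} into an explicit decay estimate for $H_k$. Writing $c_1\triangleq \tfrac{\alpha\sqrt{\mu}}{8\sqrt{2}(L_{yy}+\mu)}$, the recursion has a geometric phase (when $\sqrt{H_k}>\tfrac{1}{2c_1}$, where $H_{k+1}\le\tfrac12 H_k+\cE(\tau)$) followed by a sublinear phase $H_{k+1}\le H_k-c_1H_k^{3/2}+\cE(\tau)$, whose fixed point is $(\cE(\tau)/c_1)^{2/3}$, exactly the $(\cdot)^{2/3}$ term in the statement. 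I would prove by induction a decay bound of the form
\[
H_k\le \frac{C}{(k+4)^2}\max\Big\{H_0,\tfrac{1}{c_1^2}\Big\}+C'\,\cE(\tau)+C''\big(\cE(\tau)/c_1\big)^{2/3},
\]
the fractional $H_k^{3/2}$ drift being what produces the $1/(k+4)^2$ rate. Taking square roots and summing the same bound yields
\[
\frac1K\sum_{k=0}^{K-1}\sqrt{H_k}\le \frac{C\log(K+1)}{K}\max\Big\{\sqrt{H_0},\tfrac1{c_1}\Big\}+\sqrt{\cE(\tau)}+\big(\cE(\tau)/c_1\big)^{1/3},
\]
the logarithm coming from $\sum 1/(k+4)$. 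I expect this recursion analysis to be the main obstacle: the two-phase behaviour, the fractional exponent, and tracking the constants so they collapse to the $36$, $(K+4)^2$, and $256(L_{yy}+\mu)^2/(\alpha^2\mu)$ quantities in the statement all require care.

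\textbf{Step 2 (primal gap).} Here I would run a Frank--Wolfe descent argument on $f_\mu$ with an \emph{inexact} gradient, since the primal direction $s_k$ uses $\nabla_x\cL(x_k,y_k)$ rather than $\nabla f_\mu(x_k)=\nabla_x\cL(x_k,y_\mu^*(x_k))$. Combining the descent lemma for $f_\mu$ with fact (i), fact (ii), and $\fprod{\nabla_x\cL(x_k,y_k),x_k-s_k}=\cG_X(x_k,y_k)$ gives
\[
f_\mu(x_{k+1})\le f_\mu(x_k)-\tau\,\cG_X(x_k,y_k)+\tau L_{yx}\sqrt{\tfrac{2}{\mu}H_k}\,D_X+\tfrac{(L_{xx}+L_{yx}^2/\mu)\tau^2}{2}D_X^2,
\]
where the third term is the inexactness, bounded through $\norm{\nabla_x\cL(x_k,y_\mu^*(x_k))-\nabla_x\cL(x_k,y_k)}_{\cX^*}\le L_{yx}\norm{y_\mu^*(x_k)-y_k}_\cY$. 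Rearranging, summing over $k\in\{0,\dots,K-1\}$, telescoping $f_\mu$, and using fact (iii) to replace $f_\mu$ by $f$ at the cost of $\tfrac{\mu}{2}D_Y^2$, then passing to the second-half average using $\cG_X\ge0$ (which costs a factor $2$ and yields the $\tfrac{2(f(x_0)-f(x_K))}{\tau K}$ and $\tfrac{\mu}{\tau K}D_Y^2$ terms), produces the $\cG_X$ bound up to its explicit constants; the $\sum\sqrt{H_k}$ contribution is exactly where the $\sqrt{\cE(\tau)}$, $(\cE(\tau)/c_1)^{1/3}$, and $\tfrac{\log(K+1)}{K}$ terms enter, via Step~1. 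Choosing $t$ as the minimizer of $\cG_X(x_k,y_k)$ over $\{\lceil K/2\rceil,\dots,K-1\}$ turns the average into a pointwise bound at a second-half index.

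\textbf{Step 3 (dual gap).} For that same $t$ I would bound $\cG_Y$ by relating the Frank--Wolfe dual gap to $H_t$. Splitting $\nabla_y\cL=\nabla_y\cL_\mu+\mu(y_t-y_0)$ gives $\cG_Y(x_t,y_t)\le \cG_Y^\mu(x_t,y_t)+\mu D_Y^2$, the $\mu D_Y^2$ being the regularization bias already visible in the statement. It then remains to show $\cG_Y^\mu(x_t,y_t)\le c_\mu H_t$ with $c_\mu=2+L_{yy}/\mu$: using smoothness and quadratic growth, the component of the gap along $y_\mu^*(x_t)-y_t$ is at most $c_\mu H_t$, while the component along $p_t-y_\mu^*(x_t)$ is where \textbf{Assumption~\ref{assump2}} is essential, since on a general (e.g.\ polyhedral) set the linear-minimization vertex need not track the maximizer and this term would only be $O(\sqrt{H_t})$. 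I would use the $\alpha$-strong convexity of $Y$ to show that the vertex $p_t$ stays within $O(\norm{y_t-y_\mu^*(x_t)}_\cY)$ of $y_\mu^*(x_t)$, so that this component is $O(H_t)$ as well; substituting the Step~1 decay bound for $H_t$ at $t\ge\lceil K/2\rceil$ (where $(t+4)^{-2}\lesssim(K+4)^{-2}$) then gives the stated $\cG_Y$ bound. Making this vertex-tracking estimate precise---so that the dual gap genuinely decays like $H_t$ rather than $\sqrt{H_t}$---is the most delicate point of this step and is the place where the strong convexity of the feasible set is used most crucially.
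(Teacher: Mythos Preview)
Your Steps 1 and 2 are essentially the paper's argument. The paper isolates the recursion $a_{k+1}\le\max\{\tfrac12,1-M_1\sqrt{a_k}\}a_k+M_2$ as a standalone lemma proved by induction, obtaining $a_k\le\tfrac{9}{(k+2)^2}\max\{a_0,2/M_1^2\}+(M_2/M_1)^{2/3}+M_2$ exactly along the two-phase lines you describe; it then runs the inexact Frank--Wolfe descent on $f_\mu$ with gradient error $L_{yx}\norm{y_k-y_\mu^*(x_k)}\le L_{yx}\sqrt{2H_k/\mu}$, sums over the second-half index set $\cK=\{\lceil K/2\rceil,\dots,K-1\}$, telescopes, and picks $t=\argmin_{k\in\cK}\cG_X(x_k,y_k)$.

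Step 3 is where you diverge from the paper, and where there is a real gap. The paper's route is shorter and does \emph{not} invoke Assumption~\ref{assump2} at this point: it writes $\cG_Y(x_k,y_k)\le\fprod{\nabla_y\cL_\mu(x_k,y_k),p_k-y_k}+\mu D_Y^2$, then replaces $p_k$ by $y_\mu^*(x_k)$ and bounds $\fprod{\nabla_y\cL_\mu(x_k,y_k),y_\mu^*(x_k)-y_k}\le H_k+\tfrac{L_{yy}+\mu}{2}\norm{y_k-y_\mu^*(x_k)}^2\le c_\mu H_k$ by smoothness plus quadratic growth---no decomposition, no vertex tracking. Your suspicion that this replacement is delicate is justified (since $p_k$ is the maximizer of the linear functional, the inequality $\fprod{\nabla_y\cL_\mu,p_k-y_k}\le\fprod{\nabla_y\cL_\mu,y_\mu^*(x_k)-y_k}$ as written points the wrong way), but your proposed remedy is also flawed: strong convexity of $Y$ does \emph{not} force $\norm{p_t-y_\mu^*(x_t)}=O(\norm{y_t-y_\mu^*(x_t)})$. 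If $Y$ is the unit Euclidean ball and $y_\mu^*(x_t)\in\mathrm{int}\,Y$, then $\nabla_y\cL_\mu(x_t,y_\mu^*(x_t))=0$, so $p_t$ is a boundary point determined solely by the direction of the small vector $\nabla_y\cL_\mu(x_t,y_t)$, and $\norm{p_t-y_\mu^*(x_t)}=\Theta(1)$ no matter how close $y_t$ is to $y_\mu^*(x_t)$. What does go through is to split $\fprod{\nabla_y\cL_\mu(x_k,y_k),p_k-y_\mu^*}=\fprod{\nabla_y\cL_\mu(x_k,y_k)-\nabla_y\cL_\mu(x_k,y_\mu^*),p_k-y_\mu^*}+\fprod{\nabla_y\cL_\mu(x_k,y_\mu^*),p_k-y_\mu^*}$: the second piece is $\le0$ by optimality of $y_\mu^*$, and the first is $\le(L_{yy}+\mu)D_Y\norm{y_k-y_\mu^*}\le(L_{yy}+\mu)D_Y\sqrt{2H_k/\mu}$. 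This gives $\cG_Y\le c_\mu H_k+(L_{yy}+\mu)D_Y\sqrt{2H_k/\mu}+\mu D_Y^2$, an $O(\sqrt{H_k})$ rather than $O(H_k)$ dual bound---weaker than the theorem asserts, but still sufficient for the $\cO(K^{-1/6})$ rate of Corollary~\ref{cor:LMO-LMO}, since the $\mu D_Y^2$ regularization bias dominates the dual gap there anyway.
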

Now, we are ready to state the convergence rate and complexity results of Algorithm \ref{alg:alg1}.
\begin{corollary}\label{cor:LMO-LMO}
Under the premises of Theorem \ref{thm:LMO-LMO}, choose \mt{$\mu=\mathcal O(\epsilon)$ and $\tau=\mathcal O(\epsilon^5)$}, then for any $K\geq 1$, there exists $t\in \{\lceil K/2 \rceil,\hdots,K-1\}$ such that $(x_t,y_t)\in X\times Y$ satisfy  $\cG_Z(x_t,y_t)\leq \cO(1/K^{1/6})$. Moreover, $(x_t,y_t)$ satisfy $\cG_Z(x_t,y_t)\leq\epsilon$ and consequently is an $\epsilon$-game stationary within $\cO(\epsilon^{-6})$ iterations.
\end{corollary}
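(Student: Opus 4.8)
The plan is to substitute the prescribed parameter choices $\mu=\mathcal O(\epsilon)$ and $\tau=\mathcal O(\epsilon^5)$ directly into the two bounds of Theorem \ref{thm:LMO-LMO} and track the dependence of every term on $\mu$, $\tau$, and $K$, thereby reading off the dominant contributions to $\cG_Z=\cG_X+\cG_Y$. The preliminary observation is that, since $\cE(\tau)=L_{yx}\tau D_YD_X+2L_{xx}\tau^2D_X^2$, for small $\tau$ the linear term dominates and $\cE(\tau)=\mathcal O(\tau)$. I would also record that, by compactness of $X,Y$ and continuity of $\cL$, the quantities $f(x_0)-f(x_K)$ and $H_0$ are bounded by absolute constants (the latter uniformly in $\mu\le 1$, since the regularizer contributes at most $\tfrac{\mu}{2} D_Y^2$), and that for small $\mu$ the maxima appearing in the bounds are attained by their second arguments, so that $\max\{\sqrt{H_0},16(L_{yy}+\mu)/(\alpha\sqrt{\mu})\}=\mathcal O(\mu^{-1/2})$ and $\max\{H_0,256(L_{yy}+\mu)^2/(\alpha^2\mu)\}=\mathcal O(\mu^{-1})$, while $c_\mu=2+L_{yy}/\mu=\mathcal O(\mu^{-1})$.

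I would then separate each bound into $K$-independent ``floor'' terms and $K$-decaying terms. The floor terms of $\cG_X$ reduce to $\mathcal O(\sqrt{\tau/\mu})$ (from $\sqrt{\cE(\tau)}\cdot\mu^{-1/2}$) and $\mathcal O(\tau^{1/3}/\mu^{2/3})$ (from the cube-root term times the $\mu^{-1/2}$ prefactor), while the floor terms of $\cG_Y$ reduce to $\mathcal O(\tau/\mu)$, $\mathcal O(\tau^{2/3}/\mu^{4/3})$, and $\mathcal O(\mu)$. The crux is to balance these against the target accuracy: the term $\mu D_Y^2=\mathcal O(\mu)$ forces $\mu=\Theta(\epsilon)$, and with this choice the cube-root floor term $\tau^{1/3}/\mu^{2/3}$ equals $\epsilon$ precisely when $\tau^{1/3}=\epsilon\cdot\mu^{2/3}=\epsilon^{5/3}$, i.e. $\tau=\Theta(\epsilon^5)$, which is exactly the prescribed scaling. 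A short check then confirms that every other floor term is $\mathcal O(\epsilon^2)$ or smaller, hence dominated: $\sqrt{\tau/\mu}=\mathcal O(\epsilon^2)$, $\tau/\mu=\mathcal O(\epsilon^4)$, and $\tau^{2/3}/\mu^{4/3}=\mathcal O(\epsilon^2)$.

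For the $K$-decaying terms, after substituting $\mu=\Theta(\epsilon)$ and $\tau=\Theta(\epsilon^5)$ the contributions are $\mathcal O(1/(\tau K))=\mathcal O(1/(\epsilon^5 K))$, $\mathcal O(\mu/(\tau K))=\mathcal O(1/(\epsilon^4K))$, $\mathcal O(\tau/(\mu K))=\mathcal O(\epsilon^4/K)$, the logarithmic term $\mathcal O(\log(K+1)/(\mu K))=\mathcal O(\log(K+1)/(\epsilon K))$, and $\mathcal O(1/(\mu^2K^2))=\mathcal O(1/(\epsilon^2K^2))$. The slowest-decaying among these is $\mathcal O(1/(\epsilon^5K))$, which therefore dictates the iteration count. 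Collecting the floor and $K$-decaying parts yields $\cG_Z(x_t,y_t)\le \mathcal O(\epsilon)+\mathcal O(1/(\epsilon^5K))$; choosing $\epsilon=K^{-1/6}$ makes both pieces $\mathcal O(K^{-1/6})$ and gives the stated rate $\cG_Z(x_t,y_t)\le\mathcal O(K^{-1/6})$. Equivalently, to drive $\cG_Z\le\epsilon$ it suffices that $1/(\epsilon^5K)\lesssim\epsilon$, i.e. $K=\mathcal O(\epsilon^{-6})$, which is the claimed complexity, and the index $t$ is the one already furnished by Theorem \ref{thm:LMO-LMO}.

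I expect the main obstacle to be bookkeeping rather than any conceptual difficulty: one must verify that the interlocking powers of $\mu$ and $\tau$ across the six-or-so floor terms all collapse to $\mathcal O(\epsilon)$ simultaneously under a single pair of scalings, and in particular that the cube-root term, not the more conspicuous $\sqrt{\cE(\tau)}$ term, is the binding floor constraint that pins down $\tau=\Theta(\epsilon^5)$. Care is also needed to confirm that the logarithmic factor in the $\cG_X$ bound does not degrade the rate, which holds because $\log(K+1)/(\epsilon K)$ decays strictly faster than $1/(\epsilon^5K)$.
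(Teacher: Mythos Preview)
Your proposal is correct and follows essentially the same approach as the paper: reduce $\cE(\tau)$ to $\cO(\tau)$, isolate the $K$-independent ``floor'' terms and the $K$-decaying terms in the two bounds of Theorem~\ref{thm:LMO-LMO}, and verify that under $\mu=\Theta(\epsilon)$ and $\tau=\Theta(\epsilon^5)$ the binding floor term is $\tau^{1/3}/\mu^{2/3}=\Theta(\epsilon)$ while the dominant $K$-decaying term is $1/(\tau K)=\Theta(1/(\epsilon^5K))$, yielding the $\cO(K^{-1/6})$ rate and $\cO(\epsilon^{-6})$ complexity. The only cosmetic difference is that the paper phrases the choice of parameters as a two-stage optimization (first in $\tau$ with $\mu$ fixed, then in $\mu$), whereas you substitute the prescribed scalings directly and check each term; both routes are the same bookkeeping.
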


Considering that the proposed method achieves convergence through dual regularization, it becomes essential to address the question of convergence guarantee for Algorithm \eqref{alg:alg1} when the objective function $\cL$ is nonconvex in $x$ and strongly concave in $y$. In such a scenario, where the objective function exhibits strong concavity, regularization can be circumvented by setting $\mu = 0$. This adjustment aligns Algorithm \ref{alg:alg1} with the approach presented in \cite{gidel2017frank}, albeit it should be noted that the study in \cite{gidel2017frank} solely focused on convex-concave settings and their analysis does not extend to the nonconvex scenario.

\begin{theorem}\label{thm:C-SC:LMO-LMO}
Suppose Assumptions \ref{assump1} and \ref{assump2} hold and function $\cL(x,\cdot)$ is $\tilde\mu$-strongly concave for any $x\in X$. Let $\{(x_k,y_k)\}_{k\geq 0}$ be the sequence generated by Algorithm \ref{alg:alg1} with parameter $\mu=0$ and step-sizes \mot{$\tau_k=\cO(1/K^{3/4})$} and $\{\sigma_k\}_{k\geq 0}$. Define $\sigma_k\triangleq \min\{1,\frac{\alpha}{\mort{4 L_{yy}}}\mort{\norm{\grad_y\cL(x_k,y_k)}_{\cY^*}\}}$, then for any $K\geq 1$, there exists $t\in \{\lceil K/2 \rceil,\hdots,K-1\}$ such that $(x_t,y_t)\in X\times Y$ satisfy:\\
{\bf (i)} $\cG_X(x_t,y_t)\leq \mathcal O(1/K^{1/4})$ and $\cG_Y(x_t,y_t)\leq \cO(1/K^{1/2})$.\\
{\bf (ii)} $(x_t,y_t)$ satisfy $\epsilon$-primal gap $\cG_X(x_t,y_t)\leq \epsilon$ within $\cO(\epsilon^{-4})$ iterations and satisfy $\epsilon$-\mt{dual} gap $\cG_Y(x_t,y_t)\leq \epsilon$ within $\cO(\epsilon^{-2})$ iterations. 
\end{theorem}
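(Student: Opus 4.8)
The plan is to run Algorithm~\ref{alg:alg1} with $\mu=0$ and let the intrinsic $\tilde\mu$-strong concavity of $\cL(x,\cdot)$ play the role that the regularization modulus played in Lemma~\ref{one step lemma} and Theorem~\ref{thm:LMO-LMO}; concretely, I would reuse that machinery under the substitution $\mu\mapsto\tilde\mu$ and $(L_{yy}+\mu)\mapsto L_{yy}$, with every regularization-\emph{bias} term (the $\mu D_Y^2$ summands) dropping out. Write $y^*(x)\triangleq\argmax_{y\in Y}\cL(x,y)$ (now unique), $f(x)\triangleq\cL(x,y^*(x))$, and $H_k\triangleq f(x_k)-\cL(x_k,y_k)$. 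First I would establish the dual one-step contraction: for fixed $x_k$ the update is a Frank--Wolfe step on the $\tilde\mu$-strongly concave, $L_{yy}$-smooth map $\cL(x_k,\cdot)$ over the $\alpha$-strongly convex set $Y$. Combining the strong-convexity-of-the-set bound $\cG_Y(x_k,y_k)\ge\tfrac{\alpha}{2}\norm{\grad_y\cL(x_k,y_k)}_{\cY^*}\norm{p_k-y_k}_\cY^2$ with the step rule $\sigma_k$, the smoothness descent inequality, the crude estimate $\cG_Y(x_k,y_k)\ge H_k$, and the lower bound $\norm{\grad_y\cL(x_k,y_k)}_{\cY^*}\ge\sqrt{\tilde\mu H_k/2}$ (a consequence of strong concavity) yields the analog of \eqref{eq:Hk-telescope},
\[
H_{k+1}\le\max\Big\{\tfrac12,\,1-\tfrac{\alpha\sqrt{\tilde\mu}}{8\sqrt2\,L_{yy}}\sqrt{H_k}\Big\}H_k+\cE(\tau),
\]
with $\cE(\tau)$ unchanged since it is $\mu$-free.

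Solving this recursion as in the regularized case---reading its homogeneous part as a discretization of $\dot H=-cH^{3/2}$---gives an $\cO(1/k^2)$ transient down to the floor $\cO(\cE(\tau)^{2/3})$, so $H_t=\cO(\cE(\tau)^{2/3})$ for all $t\ge\lceil K/2\rceil$. For the primal gap I would exploit that $f$ is smooth with constant $L_f=L_{xx}+L_{yx}^2/\tilde\mu$ and that the step direction $\grad_x\cL(x_k,y_k)$ approximates $\grad f(x_k)=\grad_x\cL(x_k,y^*(x_k))$ with error at most $L_{yx}\norm{y_k-y^*(x_k)}_\cY\le L_{yx}\sqrt{2H_k/\tilde\mu}$; inserting this inexact direction into the Frank--Wolfe descent for $f$ and telescoping gives
\[
\frac1K\sum_{k=0}^{K-1}\cG_X(x_k,y_k)\le\frac{f(x_0)-f(x_K)}{\tau K}+\frac{L_{yx}\sqrt{2/\tilde\mu}\,D_X}{K}\sum_{k=0}^{K-1}\sqrt{H_k}+\frac{L_f\tau}{2}D_X^2.
\]
Using $\tfrac1K\sum_k\sqrt{H_k}=\cO(\cE(\tau)^{1/3})+\cO(\log K/K)$ from the recursion and $\tau=\cO(K^{-3/4})$ makes every term $\cO(K^{-1/4})$, so the window-minimum of $\cG_X$ over $t\in\{\lceil K/2\rceil,\dots,K-1\}$ is $\cO(K^{-1/4})$.

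The delicate part is the dual gap, where the naive $\cG_Y=\cO(\sqrt{H_t})=\cO(K^{-1/4})$ is too weak for the claimed $\cO(K^{-1/2})$. Instead I would rearrange the one-step descent into the pointwise estimate $\cG_Y(x_t,y_t)\le\tfrac{4}{3\sigma_t}\big(\cL(x_t,y_{t+1})-\cL(x_t,y_t)\big)$; telescoping the dual ascent over the second half of the horizon shows the average per-step increase is $\cO(\cE(\tau))$, while $\sigma_t\ge\min\{1,\tfrac{\alpha}{4L_{yy}}\sqrt{\tilde\mu H_t/2}\}$ is of order $\cE(\tau)^{1/3}$ at the floor. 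A pigeonhole over $t\in\{\lceil K/2\rceil,\dots,K-1\}$---feasible because both the primal average above and this dual average are small---produces a single index $t$ at which $\cG_X(x_t,y_t)=\cO(K^{-1/4})$ and $\cG_Y(x_t,y_t)=\cO(\cE(\tau)^{2/3})=\cO(K^{-1/2})$ hold simultaneously. Then $\cG_X\le\epsilon$ requires $K=\cO(\epsilon^{-4})$ and $\cG_Y\le\epsilon$ requires $K=\cO(\epsilon^{-2})$, giving (i) and (ii).

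The main obstacle is obtaining the tight dual rate $\cO(K^{-1/2})$ rather than the crude $\cO(K^{-1/4})$. This is exactly where Assumption~\ref{assump2} is indispensable: strong convexity of $Y$ furnishes the quadratic inequality that both drives the $\sqrt{H_k}$ contraction and lets me trade $\cG_Y$ against (per-step decrease)$/\sigma_t$, and it is what keeps $\sigma_t$ bounded below. The secondary difficulty is the primal--dual coupling: the inexact-gradient error $\sqrt{H_k}$ must be controlled \emph{uniformly} along the whole trajectory through the recursion, and one must certify a common index $t$ at which the primal gap, the dual per-step decrease, and the lower bound on $\sigma_t$ are all simultaneously favorable---this is what forces the second-half window and the scaling $\tau=\cO(K^{-3/4})$.
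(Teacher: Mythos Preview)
Your derivation of the one-step recursion
\[
H_{k+1}\le\max\Big\{\tfrac12,\,1-\tfrac{\alpha\sqrt{\tilde\mu}}{8\sqrt2\,L_{yy}}\sqrt{H_k}\Big\}H_k+\cE(\tau)
\]
and of the primal-gap bound via the smoothness of $f$ and telescoping is exactly the paper's argument: the paper simply replays Lemma~\ref{one step lemma} and the primal half of Theorem~\ref{thm:LMO-LMO} with $\mu=0$ and the regularization modulus replaced by the intrinsic $\tilde\mu$. The divergence is entirely in how the dual gap is handled.

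The paper does not pass through a $\cG_Y=\cO(\sqrt{H_t})$ estimate at all. It bounds the Frank--Wolfe dual gap \emph{linearly} in $H_k$: combining $L_{yy}$-smoothness of $\cL(x_k,\cdot)$ with the $\tilde\mu$-strong-concavity inequality $\|y_k-y^*(x_k)\|_\cY^2\le 2H_k/\tilde\mu$, it arrives at $\cG_Y(x_k,y_k)\le c_{\tilde\mu}\,H_k$ with $c_{\tilde\mu}=2+L_{yy}/\tilde\mu$ (this is the chain leading to \eqref{eq:subopt-y}, specialized to $\mu=0$ so that the $\mu D_Y^2$ bias drops out). Since Lemma~\ref{lem:recursive} already gives $H_t=\cO(\cE(\tau)^{2/3})=\cO(K^{-1/2})$ for every $t\ge\lceil K/2\rceil$, one obtains $\cG_Y(x_t,y_t)=\cO(K^{-1/2})$ pointwise on the whole second-half window, and the same index $t$ that minimizes the primal gap works with no additional pigeonholing.

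Your alternative route through $\cG_Y(x_t,y_t)\le \tfrac{c}{\sigma_t}\bigl(\cL(x_t,y_{t+1})-\cL(x_t,y_t)\bigr)$ has a genuine gap. You lower-bound $\sigma_t$ via $\|\nabla_y\cL(x_t,y_t)\|_{\cY^*}\ge\sqrt{\tilde\mu H_t/2}$ and then declare this ``of order $\cE(\tau)^{1/3}$ at the floor.'' But the recursion and Lemma~\ref{lem:recursive} furnish only an \emph{upper} bound on $H_t$; nothing prevents $H_t$---and hence $\|\nabla_y\cL(x_t,y_t)\|_{\cY^*}$ and $\sigma_t$---from being far smaller than the floor at the particular index selected by pigeonhole. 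Without a matching lower bound on $\sigma_t$ at that same $t$, the quotient is uncontrolled and the claimed $\cO(K^{-1/2})$ dual rate does not follow from your argument.
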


\begin{remark}
It is important to emphasize that Algorithm \ref{alg:alg1} is a single-loop method that leverages the gradient of the objective function and relies solely on the use of the LMO for handling constraints. To the best of our knowledge, our results in Corollary \ref{cor:LMO-LMO} and Theorem \ref{thm:C-SC:LMO-LMO} represent the first complexity results for finding an $\epsilon$-stationary solution in nonconvex-concave and nonconvex-strongly concave SP problems, respectively, without the need for projection onto the constraint sets.
\end{remark}

\section{Convergence Analysis of CG-RPGA}
In this section, we assume that the projection onto set $Y$ can be computed efficiently and the vector space $\cY$ is equipped with the Euclidean norm, i.e, $\norm{\cdot}_\cY=\norm{\cdot}_2$. We present the analysis for convergence of CG-RPGA (Algorithm \ref{alg:alg2}) for solving problem \ref{main}. In particular, we first consider the nonconvex-concave setting and in the following theorem and corollary, we demonstrate that the convergence rate can be improved to $\mathcal O(1/K^{1/4})$. Afterward, we consider a nonconvex-strongly concave setting and we establish the convergence rate results for CG-RPGA in Theorem \ref{thm:C-SC:LMO-PO}. 
\begin{theorem}\label{thm:LMO-PO}
Suppose Assumptions \ref{assump1}  holds and let $\{(x_k,y_k)\}_{k\geq 0}$ be the sequence generated by Algorithm \ref{alg:alg2} with step-sizes $\tau_k=\tau>0$ and $\sigma_k=\sigma\leq \frac{2}{\mort{L_{yy}+2\mu}}$, and parameter $\mu>0$. Then for any $K\geq 1$, there exists $t\in \{\lceil K/2 \rceil,\hdots,K-1\}$ such that $(x_t,y_t)\in X\times Y$ satisfy the following bounds
\begin{align*}
        \mathcal G_X(x_t,y_t)&\leq \frac{2(f(x_0)-f(x_K))}{\tau K}+ \frac{\mu}{\tau K}D_Y^2 + \frac{2L_{yx}}{(1-\rho)K}D_X\norm{y_0-y^*_\mu(x_0)}_2 \nonumber \\ &\quad + \left(\frac{2L_{yx}^2\rho}{\mu(1-\rho)} + L_{xx}+\frac{L_{yx}^2}{\mu}\right)\tau D_X^2,\\
    \mathcal G_Y(x_t,y_t)&\leq \frac{2\rho^{K/2}}{\sigma}\norm{y_0-y ^*_\mu(x_0)}_2 + \frac{2L_{yx}\rho}{\sigma\mu(1-\rho)}\tau D_X+\frac{L_{yx}}{\sigma\mu}\tau D_X+\mu D_Y.
\end{align*}
\end{theorem}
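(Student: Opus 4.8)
The plan is to treat Algorithm \ref{alg:alg2} as an inexact conditional-gradient method applied to the smoothed value function $f_\mu(x)\triangleq\max_{y\in Y}\cL_\mu(x,y)$ and to control two coupled quantities: the decrease of $f_\mu$ along the primal iterates and the dual tracking error $\Delta_k\triangleq\norm{y_k-y^*_\mu(x_k)}_2$. Two standard facts about the regularized problem drive everything. First, since $\cL_\mu(x,\cdot)$ is $\mu$-strongly concave, Danskin's theorem gives $\grad f_\mu(x)=\grad_x\cL(x,y^*_\mu(x))$, and combining Assumption \ref{assump1} with the $L_{yx}/\mu$-Lipschitz continuity of the map $x\mapsto y^*_\mu(x)$ shows $f_\mu$ is $L_f$-smooth with $L_f=L_{xx}+L_{yx}^2/\mu$. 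Second, for $\sigma\leq\tfrac{2}{L_{yy}+2\mu}$ the projected-gradient-ascent map $y\mapsto\cP_Y(y+\sigma\grad_y\cL_\mu(x_k,y))$ is a contraction toward its fixed point $y^*_\mu(x_k)$ with modulus $\rho=1-\sigma\mu<1$.

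First I would establish a one-step primal descent inequality. Applying $L_f$-smoothness to the Frank--Wolfe update $x_{k+1}=\tau s_k+(1-\tau)x_k$ and rewriting the linear-minimization direction in terms of the true partial gradient gives
\[
f_\mu(x_{k+1})\leq f_\mu(x_k)-\tau\,\cG_X(x_k,y_k)+\tau\fprod{\grad f_\mu(x_k)-\grad_x\cL(x_k,y_k),\,s_k-x_k}+\tfrac{L_f\tau^2}{2}D_X^2.
\]
The crucial point is that the LMO uses the inexact gradient $\grad_x\cL(x_k,y_k)$ rather than $\grad f_\mu(x_k)=\grad_x\cL(x_k,y^*_\mu(x_k))$; by Assumption \ref{assump1} the mismatch term is bounded by $\tau L_{yx}D_X\Delta_k$, so the primal descent is contaminated by exactly the tracking error.

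Next I would bound $\Delta_k$. Splitting $\Delta_{k+1}\leq\norm{y_{k+1}-y^*_\mu(x_k)}_2+\norm{y^*_\mu(x_k)-y^*_\mu(x_{k+1})}_2$, the first term contracts to $\rho\Delta_k$ while the second is a ``drift'' controlled by the Lipschitz continuity of $y^*_\mu$ and the primal step length, $\tfrac{L_{yx}}{\mu}\norm{x_{k+1}-x_k}_2\leq\tfrac{L_{yx}\tau D_X}{\mu}$. This yields the linear recursion $\Delta_{k+1}\leq\rho\Delta_k+\tfrac{L_{yx}\tau D_X}{\mu}$, whose unrolling gives $\Delta_k\leq\rho^k\Delta_0+\tfrac{L_{yx}\tau D_X}{\mu(1-\rho)}$ --- a transient that decays geometrically plus a persistent floor proportional to $\tau$.

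Finally I would combine the two estimates. Telescoping the primal inequality over $k=0,\dots,K-1$, substituting the bound on $\sum_k\Delta_k$, using $f_\mu(x_0)-f_\mu(x_K)\leq f(x_0)-f(x_K)+\tfrac{\mu}{2}D_Y^2$, and passing to the minimum over $t\in\{\lceil K/2\rceil,\dots,K-1\}$ (which is at most twice the full average since $\cG_X\geq0$, producing the factor $2$) delivers the stated bound on $\cG_X(x_t,y_t)$. For the dual gap I would relate the stationarity measure, defined with the true gradient $\grad_y\cL$, to the algorithm's regularized gradient mapping $\tfrac{1}{\sigma}\norm{y_t-y_{t+1}}_2$: nonexpansiveness of $\cP_Y$ bounds their difference by $\mu\norm{y_t-y_0}_2\leq\mu D_Y$, and the contraction gives $\norm{y_t-y_{t+1}}_2\leq(1+\rho)\Delta_t$. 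Restricting to $t\geq K/2$ so that $\rho^t\leq\rho^{K/2}$ then produces the geometrically small leading term. I expect the main obstacle to be the two-way coupling: the primal drift feeds the dual tracking error while that error in turn contaminates the primal descent, so one must choose $\tau$ small relative to $\mu$ and $\sigma$ so that the persistent $\tau$-floor in $\Delta_k$ does not overwhelm the $\cO(1/(\tau K))$ descent term --- and one must verify that a single index $t$ in the second half simultaneously satisfies both the averaged primal bound and the uniform dual bound.
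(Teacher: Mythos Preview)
Your proposal is correct and follows essentially the same route as the paper: view Algorithm~\ref{alg:alg2} as inexact Frank--Wolfe on the smooth surrogate $f_\mu$, control the inexactness via the dual tracking error $\Delta_k=\|y_k-y^*_\mu(x_k)\|$ using the contraction of projected gradient ascent plus the $L_{yx}/\mu$-Lipschitz drift of $y^*_\mu(\cdot)$, telescope the primal descent, and bound the dual gap through $\tfrac{1}{\sigma}\|y_t-y_{t+1}\|+\mu D_Y$. The only cosmetic differences are that the paper writes the contraction modulus as $\rho=\max\{|1-\sigma(L_{yy}+\mu)|,|1-\sigma\mu|\}$ (which, as you implicitly use, reduces to $1-\sigma\mu$ under $\sigma\leq 2/(L_{yy}+2\mu)$), places the drift inside the contraction so the persistent floor carries an extra factor $\rho$, and bounds $\|y_{t+1}-y_t\|$ by inserting both $y^*_\mu(x_t)$ and $y^*_\mu(x_{t+1})$ rather than your cleaner $(1+\rho)\Delta_t$; your observation that the dual bound is uniform over $k\geq K/2$ while $t$ is selected to minimize the primal gap is exactly how the paper resolves the ``single index'' concern.
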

\begin{corollary}\label{cor:LMO-PO}
Under the premises of Theorem \ref{thm:LMO-PO}, choose \mt{$\mu=\mathcal O(\epsilon)$ and $\tau=\mathcal O(\epsilon^3)$}, then for any $K\geq 1$, there exists $t\in \{\lceil K/2 \rceil,\hdots,K-1\}$ such that $(x_t,y_t)\in X\times Y$ satisfy  $\cG_Z(x_t,y_t)\leq \cO(1/K^{1/4})$. Moreover, $(x_t,y_t)$ satisfy $\cG_Z(x_t,y_t)\leq\epsilon$ and consequently is an $\epsilon$-game stationary within $\cO(\epsilon^{-4})$ iterations.
\end{corollary}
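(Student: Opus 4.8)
The plan is to obtain Corollary~\ref{cor:LMO-PO} as a direct consequence of the two bounds in Theorem~\ref{thm:LMO-PO}: once those bounds are available, the corollary is purely a matter of substituting the parameter choices and tracking dominant terms, with the index $t$ being exactly the one supplied by the theorem so that no new selection argument is needed. The first thing I would clarify is the role of $K$ in the stated parameters. Since the target accuracy is $\epsilon$ and the advertised rate is $\cO(1/K^{1/4})$, I would identify $\epsilon$ with $K^{-1/4}$, so that $\mu=\cO(\epsilon)=\cO(K^{-1/4})$ and $\tau=\cO(\epsilon^3)=\cO(K^{-3/4})$. Because the dual step size satisfies $\sigma\le 2/(L_{yy}+2\mu)$ with $\mu\to 0$, the bound tends to $2/L_{yy}$, so $\sigma$ stays bounded away from $0$ and $\infty$ and may be treated as a fixed constant throughout the substitution.

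The key preliminary observation I would record is the scaling of the contraction factor $\rho$ coming from the projected gradient ascent step on the $\mu$-strongly concave map $\cL_\mu(x,\cdot)$. Since $1-\rho=\Theta(\sigma\mu)$ as $\mu\to 0$, with $\sigma$ constant this gives $\tfrac{1}{1-\rho}=\Theta(1/\mu)$ and $\tfrac{1}{\mu(1-\rho)}=\Theta(1/\mu^2)$; these two quantities control every $\rho$-dependent term in the theorem. I would also note that the geometric term $\rho^{K/2}$ is negligible: since $\sigma\mu K=\Theta(K^{3/4})\to\infty$, the quantity $\rho^{K/2}$ decays faster than any power of $K$, so the leading term of the $\cG_Y$ bound can be discarded.

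With these facts the proof is term-by-term substitution. For $\cG_X$ I would use $f(x_0)-f(x_K)\le f(x_0)-\min_{x\in X}f(x)=\cO(1)$ by compactness of $X$ and continuity of $f$, so the first term is $\cO(1/(\tau K))=\cO(K^{-1/4})$; the second term $\tfrac{\mu}{\tau K}D_Y^2=\cO(K^{-1/2})$ and the third term $\tfrac{2L_{yx}}{(1-\rho)K}D_X\|y_0-y_\mu^*(x_0)\|_2=\cO(K^{-3/4})$ are of smaller order (using $\|y_0-y_\mu^*(x_0)\|_2\le D_Y$); and the fourth term, dominated by $\tfrac{L_{yx}^2\rho}{\mu(1-\rho)}\tau D_X^2=\cO(\tau/\mu^2)=\cO(K^{-1/4})$, matches the leading order. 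Hence $\cG_X(x_t,y_t)=\cO(K^{-1/4})$. For $\cG_Y$ the leading term is discarded as above, the term $\tfrac{2L_{yx}\rho}{\sigma\mu(1-\rho)}\tau D_X=\cO(\tau/\mu^2)=\cO(K^{-1/4})$ and the regularization term $\mu D_Y=\cO(K^{-1/4})$ are the dominant contributions, while $\tfrac{L_{yx}}{\sigma\mu}\tau D_X=\cO(\tau/\mu)=\cO(K^{-1/2})$ is lower order; thus $\cG_Y(x_t,y_t)=\cO(K^{-1/4})$.

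Summing gives $\cG_Z(x_t,y_t)=\cG_X(x_t,y_t)+\cG_Y(x_t,y_t)=\cO(K^{-1/4})$, the first claim. For the complexity claim I would invert this rate: forcing $\cG_Z\le\epsilon$ requires $K=\cO(\epsilon^{-4})$, consistent with the earlier identification $\mu=\cO(\epsilon)$, $\tau=\cO(\epsilon^3)$, so an $\epsilon$-game stationary solution is produced within $\cO(\epsilon^{-4})$ iterations. The main obstacle, and the only place genuine care is needed, is the balancing of the fourth $\cG_X$ term and the second $\cG_Y$ term: both scale like $\tau/\mu^2$, so the pair $\mu=\cO(K^{-1/4})$, $\tau=\cO(K^{-3/4})$ must be exactly the one that makes $\tau/\mu^2=\cO(K^{-1/4})$ while simultaneously keeping $1/(\tau K)=\cO(K^{-1/4})$. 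Confirming that no other term secretly dominates, and in particular that the scaling $1-\rho=\Theta(\mu)$ invoked above is precisely the one delivered by the proof of Theorem~\ref{thm:LMO-PO}, is the crux of the argument.
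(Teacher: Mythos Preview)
Your proposal is correct and follows essentially the same route as the paper: both substitute $\mu=\cO(K^{-1/4})$ and $\tau=\cO(K^{-3/4})$ into the bounds of Theorem~\ref{thm:LMO-PO}, use $1-\rho=\Theta(\sigma\mu)$ and $\rho^{K/2}\le\exp(-\sigma\mu K/2)=\cO(\exp(-K^{3/4}))$ to control the $\rho$-dependent terms, and then read off the $\cO(K^{-1/4})$ rate and $\cO(\epsilon^{-4})$ complexity. Your term-by-term accounting is in fact more explicit than the paper's, which simply states the simplified bounds and the parameter choices and asserts the conclusion.
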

Similar to section \ref{sec:R-PDCG}, it is important to address the question of convergence guarantee for Algorithm \eqref{alg:alg2} when the objective function is strongly concave in $y$. In this scenario, regularization can be avoided by setting $\mu = 0$. In the following theorem, we establish the convergence rate of Algorithm \ref{alg:alg2} for SP problem \eqref{main} in this setting.
\begin{theorem}\label{thm:C-SC:LMO-PO}
Suppose Assumptions \ref{assump1} holds and function $\cL(x,\cdot)$ is $\tilde\mu$-strongly concave for any $x\in X$. Let $\{(x_k,y_k)\}_{k\geq 0}$ be the sequence generated by Algorithm \ref{alg:alg2} with parameter $\mu=0$ and step-sizes \mot{$\tau_k=\cO(1/K^{1/2})$} and $\sigma_k=\sigma\leq \frac{2}{L_{yy}+\tilde\mu}$. Then for any $K\geq 1$, there exists $t\in \{\lceil K/2 \rceil,\hdots,K-1\}$ such that $(x_t,y_t)\in X\times Y$ satisfy $\cG_Z(x_t,y_t)\leq \mathcal O(1/K^{1/2})$. Moreover, $(x_t,y_t)$ satisfy $\cG_Z(x_t,y_t)\leq \epsilon$ within $\cO(\epsilon^{-2})$ iterations.
\end{theorem}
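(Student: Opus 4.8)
The plan is to follow the same two-track template used for Theorem \ref{thm:LMO-PO}, but to exploit the built-in strong concavity so that no regularization bias is incurred. Since $\mu=0$, we have $\cL_\mu=\cL$ and the maximizer $y^*(x)\triangleq\argmax_{y\in Y}\cL(x,y)$ is unique for every $x\in X$. By Danskin's theorem together with Assumption \ref{assump1} and $\tilde\mu$-strong concavity, the envelope $f(x)=\max_{y\in Y}\cL(x,y)$ is differentiable with $\grad f(x)=\grad_x\cL(x,y^*(x))$ and is $L_f$-smooth with $L_f=L_{xx}+L_{yx}^2/\tilde\mu$; moreover the map $x\mapsto y^*(x)$ is $(L_{yx}/\tilde\mu)$-Lipschitz. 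The entire analysis then hinges on the tracking error $e_k\triangleq\norm{y_k-y^*(x_k)}_2$, which replaces the role played by $\cL_\mu(x_k,y^*_\mu(x_k))-\cL_\mu(x_k,y_k)$ in Lemma \ref{one step lemma}.

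First I would derive a one-step recursion for $e_k$. The dual update is projected gradient ascent $y_{k+1}=\cP_Y(y_k+\sigma\grad_y\cL(x_k,y_k))$ on the $\tilde\mu$-strongly concave, $L_{yy}$-smooth map $\cL(x_k,\cdot)$, whose fixed point is exactly $y^*(x_k)$. Non-expansiveness of $\cP_Y$ together with the standard contraction estimate for (projected) gradient steps on strongly concave objectives, valid for $\sigma\leq 2/(L_{yy}+\tilde\mu)$, yields $\norm{y_{k+1}-y^*(x_k)}_2\leq\rho\,e_k$ with $\rho\triangleq\max\{|1-\sigma\tilde\mu|,|1-\sigma L_{yy}|\}<1$. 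Accounting for the drift of the target caused by the Frank--Wolfe primal move $\norm{x_{k+1}-x_k}_\cX\leq\tau D_X$ via the Lipschitz continuity of $y^*$ gives
\begin{equation*}
e_{k+1}\leq \rho\,e_k+\frac{L_{yx}}{\tilde\mu}\tau D_X.
\end{equation*}
Unrolling this contraction-with-drift recursion produces $e_k\leq\rho^{k}e_0+\tfrac{L_{yx}\tau D_X}{\tilde\mu(1-\rho)}$, so for every $t\geq K/2$ the transient $\rho^{t}e_0$ is negligible and $e_t\leq\rho^{K/2}e_0+\cO(\tau)$. Combined with the bound $\cG_Y(x_k,y_k)\leq\tfrac{2+\sigma L_{yy}}{\sigma}e_k$ that follows from Definition \ref{def:PO-gap} (again using $\cP_Y(y^*(x_k)+\sigma\grad_y\cL(x_k,y^*(x_k)))=y^*(x_k)$ and non-expansiveness), this controls the dual gap uniformly on the second half of the horizon.

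For the primal gap I would apply the descent lemma to the $L_f$-smooth function $f$ along $x_{k+1}=x_k+\tau(s_k-x_k)$. Writing $\grad f(x_k)=\grad_x\cL(x_k,y_k)+\big(\grad_x\cL(x_k,y^*(x_k))-\grad_x\cL(x_k,y_k)\big)$ and using $\fprod{\grad_x\cL(x_k,y_k),s_k-x_k}=-\cG_X(x_k,y_k)$ by optimality of $s_k$, the inexactness term is bounded by $L_{yx}e_k D_X$ via Assumption \ref{assump1}(II). Telescoping over $k$ and inserting the summed dual error $\sum_k e_k\leq \tfrac{e_0}{1-\rho}+\tfrac{L_{yx}\tau D_X K}{\tilde\mu(1-\rho)}$ gives
\begin{equation*}
\tau\sum_{k=0}^{K-1}\cG_X(x_k,y_k)\leq f(x_0)-f(x_K)+\tau L_{yx}D_X\sum_{k=0}^{K-1}e_k+\tfrac{L_f}{2}\tau^2 K D_X^2,
\end{equation*}
which, after dividing by $\tau K$ and restricting the nonnegative sum to indices in $\{\lceil K/2\rceil,\dots,K-1\}$, bounds $\min_{t}\cG_X(x_t,y_t)$. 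Choosing $\tau=\cO(1/K^{1/2})$ balances the $\tfrac{1}{\tau K}$ term against the $\cO(\tau)$ drift terms, so that both $\cG_X$ and $e_t$ (hence $\cG_Y$) are $\cO(1/K^{1/2})$ at the same index $t\geq K/2$; summing yields $\cG_Z(x_t,y_t)\leq\cO(1/K^{1/2})$, and setting this $\leq\epsilon$ gives the $\cO(\epsilon^{-2})$ complexity.

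The main obstacle is the coupling between the two tracks: the primal descent bound contains the accumulated dual error $\sum_k e_k$, which a priori grows linearly in $K$ through its drift component, while the dual recursion's drift is itself driven by the primal step size $\tau$. The crux is therefore to show that the contraction factor $\rho<1$ keeps the per-step error at the $\cO(\tau)$ level so that $\tau\sum_k e_k=\cO(\tau^2 K)=\cO(1)$ under $\tau=\cO(1/K^{1/2})$, which is precisely what removes the regularization-induced $\mu D_Y$ penalty present in Theorem \ref{thm:LMO-PO} and improves the rate from $\cO(1/K^{1/4})$ to $\cO(1/K^{1/2})$. Verifying the contraction constant and the $e_k$-to-$\cG_Y$ comparison under the prescribed range $\sigma\leq 2/(L_{yy}+\tilde\mu)$ is the one place where the strong-concavity modulus $\tilde\mu$ must be tracked carefully.
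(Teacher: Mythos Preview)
Your proposal is correct and follows essentially the same route as the paper: both arguments adapt Lemmas \ref{rho}--\ref{lem:rate-y} to the unregularized setting (with $\tilde\rho=\max\{|1-\sigma L_{yy}|,|1-\sigma\tilde\mu|\}$), feed the resulting tracking bound $e_k\leq\tilde\rho^k e_0+\cO(\tau)$ into the descent lemma for the smooth envelope $f$ to control $\cG_X$, and observe that the $\mu D_Y$ bias term in \eqref{eq:G_Y-bound} disappears so that $\cG_Y$ is $\cO(\tau)$ on the second half of the horizon. The only cosmetic difference is that the paper bounds $\cG_Y(x_t,y_t)=\tfrac{1}{\sigma}\|y_{t+1}-y_t\|$ via $e_t+e_{t+1}$ plus a drift term, whereas you bound it directly by $\tfrac{2+\sigma L_{yy}}{\sigma}e_t$; both give the same order.
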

\section{Numerical Experiment}
In this section, we implement our methods to solve Robust Multiclass Classification problem described in Example \ref{example1} and Dictionary Learning problem in Example \ref{dictionary}. For all the algorithms, the step-sizes are selected as suggested by their theoretical result and scaled to have the best performance. In particular, for R-PDCG we let $\tau=\frac{10}{K^{5/6}}$ and $\mu=\frac{10^{-3}}{K^{1/6}}$; for CG-RPGA we let $\tau=\frac{10}{K^{3/4}}$ and $\mu=\frac{10^{-3}}{K^{1/4}}$;  for AGP we let the primal step-size $\frac{1}{\sqrt{k}}$, dual step-size as 0.2, and the dual regularization parameter as $\frac{10^{-1}}{k^{1/4}}$; for SPFW both primal and dual step-sizes are selected to be diminishing as $\frac{2}{k+2}$. \mot{Supplementary plots are provided in the appendix.}

\noindent \textbf{Robust Multiclass Classification:} 
To assess the performance of our proposed algorithms (R-PDCG and CG-RPGA), we tested them against the Alternating Gradient Projection (AGP) algorithm introduced by \cite{xu2023unified} and the Saddle Point Frank Wolfe (SPFW) algorithm introduced by \cite{gidel2017frank}. We conduct experiments on \texttt{rcv1} dataset ($n =
15564$, $d=47236$, $k=53$) and \texttt{news20} dataset ($n =
15935$, $d=62061$, $k=20$) from LIBSVM repository\footnote{https://www.csie.ntu.edu.tw/$\sim$cjlin/libsvmtools/datasets}. As shown in Figure \ref{rm1}, our algorithms outperform the competing approaches, highlighting the advantage of utilizing a projection-free approach. In this example, the high per-iteration computational cost of the projection operator significantly impacts AGP, with more than three iterations taking over 300 seconds reflecting the benefit of projection-free algorithms for a certain class of problems. For problems with easy-to-project constraints, projection-based algorithms such as AGP may have a better performance, however,
this example supports the motivation behind the development of projection-free methods for saddle point problems with hard-to-project constraints, particularly when an LMO is available. 
\begin{figure}[htb]
\vspace*{-5mm}
    \centering
    \subfigure[rcv1]{\includegraphics[scale=0.33]{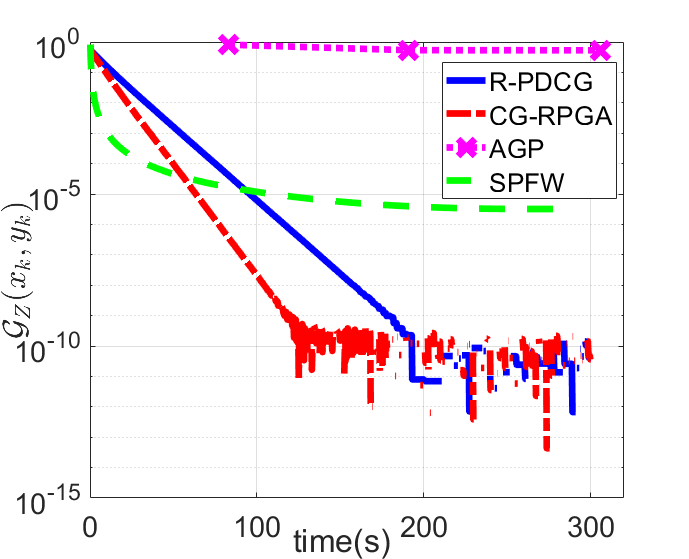}}
    \subfigure[news20]{\includegraphics[scale=0.33]{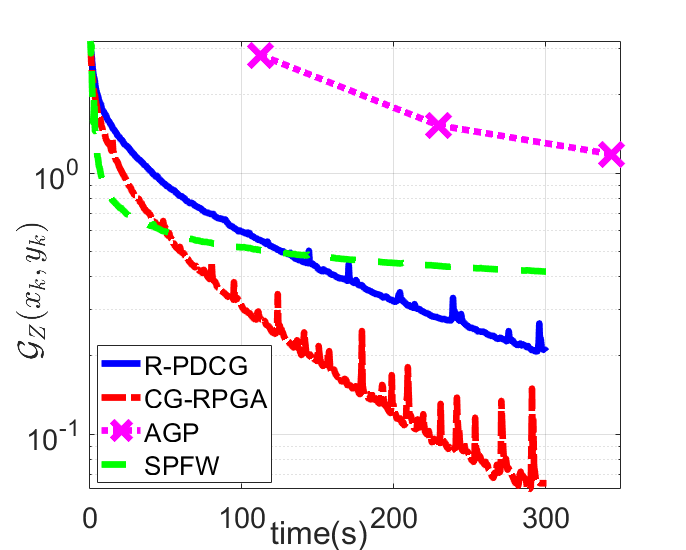}}
    \caption{Comparing the performance of our proposed methods R-PDCG (blue) and CG-RPGA (red) with AGP (magenta) and SPFW (green) in the Robust Multiclass Classification problem.}
    \vspace*{-2mm}
    \label{rm1}
\end{figure}

\noindent \textbf{Dictionary Learning:} 
Considering the dictionary learning problem in \eqref{eq:dic}, we compared the performance of our proposed methods, R-PDCG (Algorithm \ref{alg:alg1}) and CG-RPGA (Algorithm \ref{alg:alg2}) with AGP \cite{xu2023unified} and SPFW \cite{gidel2017frank} although SPFW does not have a theoretical guarantee for nonconvex-concave SP. 
The datasets are generated randomly from a standard Gaussian distribution with details described in Section \ref{example details} of the Appendix. Notably, CG-RPGA has a faster convergence rate compared to R-PDCG matching our theoretical results (see Table \ref{label}). Moreover, AGP which is a fully projection-based algorithm has the slowest convergence behavior in terms of time compared to other methods. 
Solving a linear optimization problem over the nuclear norm ball requires computing only a single pair of singular vectors corresponding to the largest singular value, whereas computing a projection onto the nuclear norm ball demands a full SVD. The computational cost of latter operation is $\mathcal O(kd\min(k,d))$, while the computational cost of the former one is $\mathcal O(\nu \mbox{ln}(k+d)\sqrt{\sigma_1}/\sqrt{\epsilon})$, where $\nu\leq kd$ and $\sigma_1$ are the number of nonzero entries and the top singular value of $-\nabla_x\mathcal L(x,y)$, respectively, and $\epsilon$ is the accuracy \cite{combettes2021complexity}. Therefore, in this example, LMO is considerably more cost-effective to compute than the projection method. Figure \ref{dl1} depicts our methods' superior performance compared to other algorithms.
\begin{figure}[htb]
\vspace*{-5mm}
    \centering
    \includegraphics[scale=0.26]{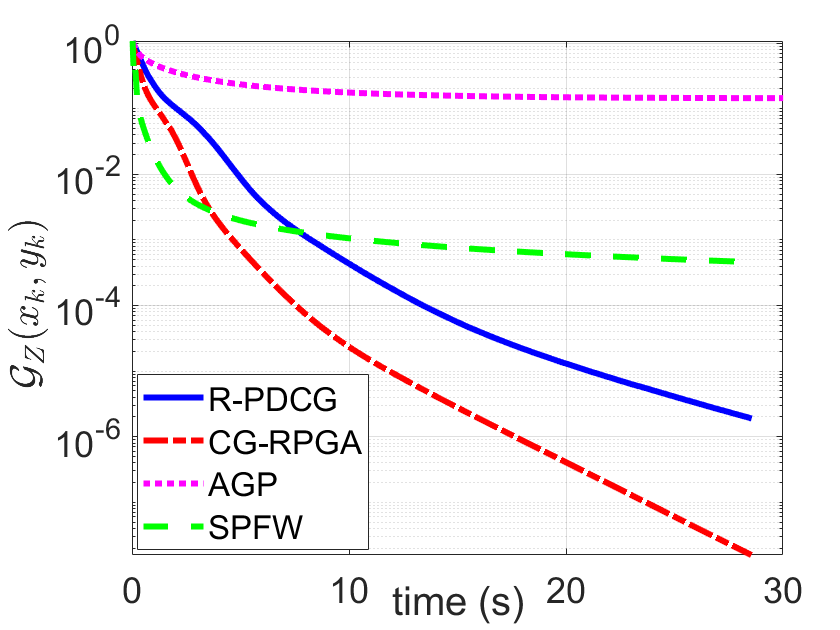} 
    \includegraphics[scale=0.26]{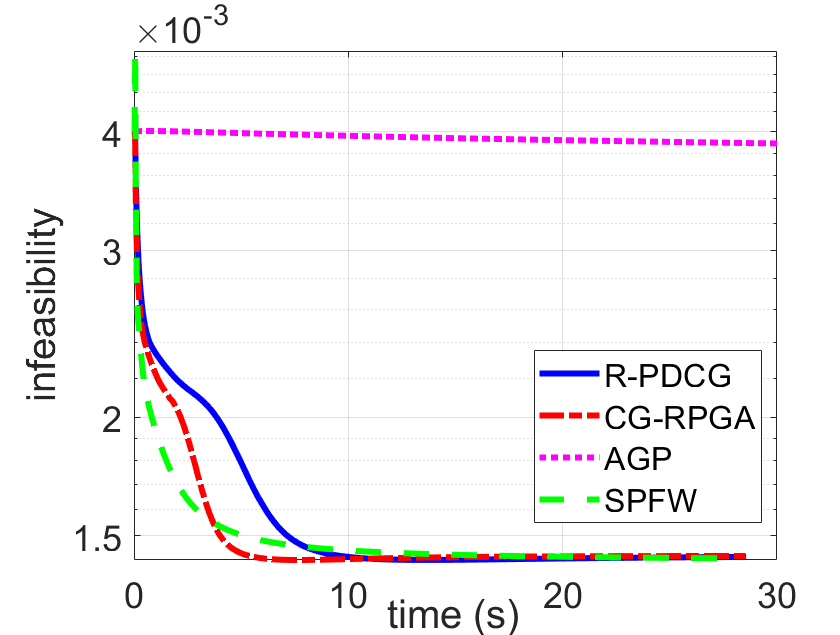}
    \caption{Comparing the performance of our proposed methods R-PDCG (blue) and CG-RPGA (red) with other methods AGP (magenta) and SPFW (green). The plots from left to right are trajectories of gap function and infeasibility for problem \eqref{eq:dic}.}
    \vspace*{-2mm}
    \label{dl1}
\end{figure}
\section{Conclusion}
In this paper, we proposed primal-dual projection-free methods for solving a broad class of constrained nonconvex-concave problems. Using a regularization technique we devised a single-loop method relying on LMO for handling constraints. In particular, we show that R-PDCG achieves an $\epsilon$-stationary solution within $\mathcal{O}(\epsilon^{-6})$ iterations assuming that the constraint set is strongly convex. Also, our method achieves $\epsilon$-primal and $\epsilon$-dual gaps within $\mathcal{O}(\epsilon^{-4})$ and $\mathcal{O}(\epsilon^{-2})$ iterations, respectively, for nonconvex-strongly concave problems. To the best of our knowledge, this is the first fully projection-free primal-dual method with a convergence guarantee for nonconvex SP problems. Additionally, when the projection on the maximization constraint is easy to compute we propose a one-sided projection-free primal-dual method called CG-RPGA with iteration complexity of $\mathcal{O}(\epsilon^{-4})$ matching the best-known results for projection-based primal-dual methods, and improves to $\mathcal{O}(\epsilon^{-2})$ iterations for nonconvex-strongly concave setting. 
We acknowledge that the proposed method is currently limited to the deterministic setting and we plan to study such SP problems under uncertainty and distributed settings in future work.

\bibliography{papers}
\bibliographystyle{alpha}

\newpage
\appendix
\section*{Appendix}
\mort{In Section \ref{tech}, we present a set of general technical lemmas that are essential for proving the results in the paper. In Section \ref{seclemmalmo}, we present the necessary lemmas to establish the proof of the result for Algorithm \ref{alg:alg1}, which  focuses on the utilization of the LMO  for both variables. Subsequently, in section \ref{seclmo}, we analyze and demonstrate the convergence rate of Algorithm \ref{alg:alg1} in Theorem \ref{thm:LMO-LMO} and Corollary \ref{cor:LMO-LMO} for nonconvex-concave (NC-C) scenario and in Theorem \ref{thm:C-SC:LMO-LMO} for nonconvex-strongly concave (NC-SC) scenario. Moving forward to Section \ref{seclemmapo}, we introduce the lemmas essential for verifying the correctness of Algorithm \ref{alg:alg2}, which involves employing the LMO for the minimization variables and the PO for the maximization variable. Furthermore, in Section \ref{secpo}, we investigate and establish the convergence rate of Algorithm \ref{alg:alg2} in Theorem \ref{thm:LMO-PO} and Corollary \ref{cor:LMO-PO} for NC-C scenario and in Theorem \ref{thm:C-SC:LMO-PO} for NC-SC scenario.} Finally, in Sections \ref{example details} and \ref{additional example}, \mot{details of our numerical experiment and supplementary plots are provided.}
To simplify the notations, we will drop the associated space from the norms unless it is not clear from the context. For instance $\norm{x}_\cX$ and $\norm{x}_{\cX^*}$ will be replaced by $\norm{x}$ and $\norm{x}_*$, respectively.

\begin{definition}\label{def:f-mu}
Let $f_\mu:X\to \reals$ be a function such that $f_\mu(x)\triangleq \max_{y\in Y} \cL(x,y)-\frac{\mu}{2}\norm{y-y_0}^2$. Moreover, we define $y^*_\mu(x)\triangleq \argmin_{y\in Y}\cL(x,y)-\frac{\mu}{2}\norm{y-y_0}^2$. 
\end{definition}

\section{Technical Lemmas}\label{tech}
\mort{We will now present technical lemmas that will be utilized in the proofs.}
\begin{lemma}\label{lmu} \cite{lin2020gradient}
The solution map $y^*_\mu:X\to Y$ is Lipschitz continuous. In particular, for any $x,\bar x \in X$
\begin{align*}
    \norm{y^*_\mu(\bar x)-y^*_\mu(x)}\leq \frac{ L_{yx}}{\mu}\norm{\bar x -x}.
\end{align*}
\end{lemma}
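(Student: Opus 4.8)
The plan is to exploit the $\mu$-strong concavity of the regularized objective $\cL_\mu(x,\cdot)\triangleq\cL(x,\cdot)-\tfrac{\mu}{2}\norm{\cdot-y_0}^2$ in the dual variable, which (by Assumption~\ref{assump1}(II)) guarantees that $y^*_\mu(x)$ is the unique maximizer over the convex set $Y$ and is characterized by a first-order variational inequality. Writing $y\triangleq y^*_\mu(x)$ and $\bar y\triangleq y^*_\mu(\bar x)$, optimality gives $\fprod{\grad_y\cL(x,y)-\mu(y-y_0),\,y'-y}\le 0$ for all $y'\in Y$, and likewise $\fprod{\grad_y\cL(\bar x,\bar y)-\mu(\bar y-y_0),\,y'-\bar y}\le 0$ for all $y'\in Y$. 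I would instantiate the first inequality at $y'=\bar y$ and the second at $y'=y$, and then add the two.

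After adding, the regularization pieces $-\mu(y-y_0)$ and $-\mu(\bar y-y_0)$ combine into a strongly monotone contribution, producing the inequality $\mu\norm{y-\bar y}^2 \le \fprod{\grad_y\cL(x,y)-\grad_y\cL(\bar x,\bar y),\,y-\bar y}$. I would then split the right-hand side by inserting and subtracting $\grad_y\cL(x,\bar y)$. The resulting term $\fprod{\grad_y\cL(x,y)-\grad_y\cL(x,\bar y),\,y-\bar y}$ is nonpositive, since the gradient of the concave map $\cL(x,\cdot)$ is monotone nonincreasing, and hence may simply be dropped.

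What remains is $\mu\norm{y-\bar y}^2 \le \fprod{\grad_y\cL(x,\bar y)-\grad_y\cL(\bar x,\bar y),\,y-\bar y}$, which I would bound via the duality pairing (Cauchy--Schwarz) by $\norm{\grad_y\cL(x,\bar y)-\grad_y\cL(\bar x,\bar y)}_*\,\norm{y-\bar y}$, and then apply the mixed Lipschitz bound of Assumption~\ref{assump1}(II) with the second dual argument held fixed at $\bar y$, giving $\norm{\grad_y\cL(x,\bar y)-\grad_y\cL(\bar x,\bar y)}_*\le L_{yx}\norm{x-\bar x}$ (the $L_{yy}$ term vanishes since both arguments equal $\bar y$). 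Dividing through by $\mu\norm{y-\bar y}$, with the case $y=\bar y$ handled trivially, yields exactly $\norm{y^*_\mu(\bar x)-y^*_\mu(x)}\le\tfrac{L_{yx}}{\mu}\norm{\bar x-x}$.

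I do not anticipate a genuine obstacle here: this is a standard strong-monotonicity/perturbation estimate. The only points demanding care are bookkeeping — tracking signs when the two variational inequalities are combined, recognizing that the pure-dual cross-term is nonpositive by concavity, and invoking the Lipschitz estimate with the correct argument frozen so that only the coupling constant $L_{yx}$ (and not $L_{yy}$) enters. Since this is precisely the result attributed to \cite{lin2020gradient}, one could also cite it directly, but the short monotonicity derivation above is self-contained.
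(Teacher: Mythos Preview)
Your proposal is correct and follows essentially the same approach as the paper: both combine the first-order optimality variational inequalities for $y^*_\mu(x)$ and $y^*_\mu(\bar x)$, use the $\mu$-strong concavity of $\cL_\mu(x,\cdot)$ to extract the $\mu\norm{y-\bar y}^2$ term, and finish with Cauchy--Schwarz plus the $L_{yx}$ Lipschitz bound. The only cosmetic difference is bookkeeping---the paper writes the strong monotonicity of $\grad_y\cL_\mu(x,\cdot)$ as a separate inequality and then combines it with the summed VIs, whereas you pull the $\mu$-term directly out of the regularizer within the VI sum and drop the remaining pure-dual cross term by plain concavity of $\cL(x,\cdot)$; these are algebraically equivalent.
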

\begin{proof}
First note that since  $\cL_\mu(x,\cdot)$ is strongly concave for any $x\in X$, we have that 
\begin{align}\label{concavity}
    (y^*_\mu( \bar x)-y^*_\mu(x))^\top (\nabla_y \cL_\mu( x,y^*_\mu( \bar x)) - \nabla_y \cL_\mu( x,y^*_\mu( x))+\mu \norm{y^*_\mu(\bar x)-y^*_\mu(x)}^2 \leq 0.
\end{align}
Moreover, the optimality of $y^*_\mu(\bar x)$ and  $y^*_\mu(x)$ given that $\cL_\mu(x,y)=\cL(x,y)-\frac{\mu}{2}\norm{y-y_0}^2$ implies that for any $y\in Y$,  
\begin{align}
    (y-y^*_\mu(\bar x))^\top \nabla_y \cL_\mu(\bar x,y^*_\mu(\bar x))\leq 0, \label{44}\\
    (y-y^*_\mu( x))^\top \nabla_y \cL_\mu( x,y^*_\mu( x))\leq 0. \label{55}
\end{align}
Let $ y= y^*_\mu( x)$ in \ref{44} and $ y=y^*_\mu(\bar x)$ in \ref{55} and summing up two inequalities, we obtain
\begin{align}\label{summ}
    (y^*_\mu(x)-y^*_\mu(\bar x))^\top (\nabla_y \cL_\mu(\bar x,y^*_\mu( \bar x)) - \nabla_y \cL_\mu( x,y^*_\mu(x)))\leq 0.
\end{align}
By combining \ref{summ} and \ref {concavity} we have
\begin{align*}
    \mu \norm{y^*_\mu(\bar x)-y^*_\mu(x)}^2 & \leq (y^*_\mu(\bar x)-y^*_\mu(x))^\top (\nabla_y \cL_\mu( \bar x,y^*_\mu(\bar x))-\nabla_y \cL_\mu(x , y^*_\mu (\bar x)))\nonumber\\
    & \overset{(a)}{\leq} L_{yx}\norm{y^*_\mu(\bar x)-y^*_\mu(x)} \norm{ \bar x -x},
\end{align*}
where $(a)$ follows from Assumption \ref{assump1}. The result follows immediately from the above inequality. 
\end{proof}

\begin{lemma} \mort{\cite{lin2020gradient}}
The function $f_\mu(\cdot)$ is differentiable on an open set containing $X$ and $\grad f_\mu(x)=\grad_x\cL(x,y^*_\mu(x))$ where  $y^*_\mu(x)\triangleq \argmin_{y\in Y}\cL_\mu(x,y)$. Moreover, $f_\mu$ has a Lipschitz continuous gradient with constant $L_{f_\mu}\triangleq L_{xx}+L_{yx}^2/\mu$. 
\end{lemma}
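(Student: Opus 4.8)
The plan is to separate the statement into two pieces: the Danskin-type differentiability formula $\nabla f_\mu(x)=\grad_x\cL(x,y^*_\mu(x))$, and the Lipschitz estimate on $\nabla f_\mu$ with constant $L_{f_\mu}=L_{xx}+L_{yx}^2/\mu$. The Lipschitz bound will drop out almost immediately once the gradient formula is in hand, so the real work is the envelope (sandwich) argument establishing differentiability. I would treat $y^*_\mu(x)=\argmax_{y\in Y}\cL_\mu(x,y)$ as the unique maximizer consistent with $f_\mu(x)=\max_{y\in Y}\cL_\mu(x,y)$.

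First I would record the structural facts the argument rests on. Since $\cL(x,\cdot)$ is concave by Assumption \ref{assump1}(II) and we subtract the $\mu$-strongly convex term $\tfrac{\mu}{2}\norm{y-y_0}^2$, the map $\cL_\mu(x,\cdot)$ is $\mu$-strongly concave on the compact set $Y$; hence $y^*_\mu(x)$ is unique and well-defined for each $x$. I would also note that $\grad_x\cL_\mu(x,y)=\grad_x\cL(x,y)$ because the regularizer does not depend on $x$, that this gradient is jointly continuous and $L_{xx}$-Lipschitz in $x$ by Assumption \ref{assump1}(I), and that differentiability will be shown locally around each point of a neighborhood of $X$ (which is where the ``open set'' in the statement comes from, assuming $\cL(\cdot,y)$ extends smoothly past $X$).

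For the gradient formula I would run a two-sided estimate around a fixed $x$, setting $g\triangleq \grad_x\cL(x,y^*_\mu(x))$. For a lower bound on $f_\mu(\bar x)-f_\mu(x)$, I use suboptimality of $y^*_\mu(x)$ at $\bar x$, namely $f_\mu(\bar x)\geq \cL_\mu(\bar x,y^*_\mu(x))$, together with the descent lemma for the $L_{xx}$-smooth map $\cL_\mu(\cdot,y^*_\mu(x))$, giving $f_\mu(\bar x)-f_\mu(x)\geq \langle g,\bar x-x\rangle-\tfrac{L_{xx}}{2}\norm{\bar x-x}^2$. For the upper bound I use suboptimality of $y^*_\mu(\bar x)$ at $x$, namely $f_\mu(x)\geq \cL_\mu(x,y^*_\mu(\bar x))$, and smoothness of $\cL_\mu(\cdot,y^*_\mu(\bar x))$; the cross term $\langle \grad_x\cL(x,y^*_\mu(\bar x))-\grad_x\cL(x,y^*_\mu(x)),\bar x-x\rangle$ that appears is bounded by $L_{yx}\norm{y^*_\mu(\bar x)-y^*_\mu(x)}\,\norm{\bar x-x}$, where I invoke Lemma \ref{lmu} to get $\norm{y^*_\mu(\bar x)-y^*_\mu(x)}\leq \tfrac{L_{yx}}{\mu}\norm{\bar x-x}$. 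These two bounds sandwich $f_\mu(\bar x)-f_\mu(x)-\langle g,\bar x-x\rangle$ between quantities of order $\norm{\bar x-x}^2$, yielding both differentiability at $x$ and the identity $\grad f_\mu(x)=g$.

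Finally, for the Lipschitz estimate I would chain the gradient formula with Assumption \ref{assump1}(I) and Lemma \ref{lmu}: for any $x,\bar x$,
\[
\norm{\grad f_\mu(\bar x)-\grad f_\mu(x)}=\norm{\grad_x\cL(\bar x,y^*_\mu(\bar x))-\grad_x\cL(x,y^*_\mu(x))}\leq L_{xx}\norm{\bar x-x}+L_{yx}\norm{y^*_\mu(\bar x)-y^*_\mu(x)}\leq \Big(L_{xx}+\tfrac{L_{yx}^2}{\mu}\Big)\norm{\bar x-x},
\]
which gives the claimed constant. I expect the only delicate point to be the envelope argument itself, specifically keeping careful track of \emph{which} maximizer is used in each one-sided inequality and ensuring the cross term is absorbed via Lemma \ref{lmu}; the uniqueness of $y^*_\mu(x)$ coming from strong concavity is precisely what makes the sandwich collapse to a genuine derivative rather than a mere directional bound.
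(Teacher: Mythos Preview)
Your proposal is correct. The Lipschitz-gradient part is identical to the paper's argument: apply Assumption~\ref{assump1}(I) to $\grad_x\cL$ at the two pairs $(x,y^*_\mu(x))$ and $(\bar x,y^*_\mu(\bar x))$, then control the $y$-difference via Lemma~\ref{lmu}.

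The only difference is in how you establish the gradient formula. The paper dispatches differentiability and the identity $\nabla f_\mu(x)=\nabla_x\cL(x,y^*_\mu(x))$ in one line by citing Danskin's theorem, whereas you give a self-contained envelope (sandwich) argument using the two one-sided bounds $f_\mu(\bar x)\geq \cL_\mu(\bar x,y^*_\mu(x))$ and $f_\mu(x)\geq \cL_\mu(x,y^*_\mu(\bar x))$ together with $L_{xx}$-smoothness and Lemma~\ref{lmu}. Your route is more elementary and exposes exactly where the $\mu$-strong concavity (hence uniqueness of $y^*_\mu(x)$) and the Lipschitz bound on the maximizer map enter; the paper's route is shorter but relies on the reader knowing the hypotheses of Danskin's theorem are met. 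Both yield the same conclusion, and you correctly flag that the $\argmin$ in the statement should be read as $\argmax$ consistently with $f_\mu(x)=\max_{y\in Y}\cL_\mu(x,y)$.
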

\begin{proof}
    From Danskins's theorem \cite{bernhard1995theorem} one can obtain $f_\mu(\cdot)$ is differentiable and $\grad f_\mu(x)=\grad_x\cL(x,y^*_\mu(x))$. Therefore, we have 
    \begin{align*}
        \norm{\grad f_\mu(x)- \grad f_\mu(x^\prime)}&= \norm{\grad_x\cL(x,y^*_\mu(x)) - \grad_x\cL(x^\prime,y^*_\mu(x^\prime))}\nonumber \\
        &\leq L_{xx}\norm{x-x^\prime}+L_{yx}\norm{y^*_\mu(x)- y^*_\mu(x^\prime)}\nonumber \\
        &\leq L_{xx}\norm{x-x^\prime}+\frac{L_{yx}^2}{\mu}\norm{x-x^\prime},
    \end{align*}
    where the last inequality holds by Lemma \ref{lmu}. 

\end{proof}
\section{Required Lemmas for Theorems \ref{thm:LMO-LMO} and \ref{thm:C-SC:LMO-LMO}}\label{seclemmalmo}


\begin{lemma}\label{lem:recursive}
    Let $\{a_k\}_{k\geq 0}$ be a sequence of non-negative real numbers such that $a_{k+1}\leq \max\{1/2,1-M_1\sqrt{a_k}\}a_k+M_2$ for some $M_1,M_2>0$ and any $k\geq 0$. Then,
    \begin{equation}\label{eq:recursive}
        a_k\leq \frac{9}{(k+2)^2}\max\left\{a_0,\frac{2}{M_1^2}\right\} + \left(\frac{M_2}{M_1}\right)^{2/3}+M_2, \quad \forall k\geq 1.
    \end{equation}
\end{lemma}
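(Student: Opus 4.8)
===
PROOF PROPOSAL
===

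\textbf{Overall approach.} The recursion $a_{k+1}\leq \max\{1/2,\,1-M_1\sqrt{a_k}\}a_k+M_2$ has a built-in phase transition, so the plan is to split the analysis into two regimes according to which term achieves the maximum. When $a_k$ is large (specifically when $1-M_1\sqrt{a_k}\geq 1/2$, i.e. $\sqrt{a_k}\leq 1/(2M_1)$ fails — note the inequality flips, so ``large'' means $\sqrt{a_k}\geq 1/(2M_1)$), the active factor is $1-M_1\sqrt{a_k}$ and the decrease is governed by a sublinear ODE $\dot{a}\approx -M_1 a^{3/2}$, whose solution decays like $1/k^2$. When $a_k$ is small, the factor $1/2$ dominates and we get geometric contraction down to the fixed-point floor determined by $M_2$. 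The target bound \eqref{eq:recursive} is exactly the sum of a $1/(k+2)^2$ transient term and a constant floor of order $(M_2/M_1)^{2/3}+M_2$, which is the equilibrium level where the per-step decrease $M_1 a^{3/2}$ balances the additive error $M_2$ (setting $M_1 a^{3/2}=M_2$ gives $a=(M_2/M_1)^{2/3}$).

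\textbf{Key steps.} First I would define the floor $b\triangleq (M_2/M_1)^{2/3}+M_2$ and argue that once $a_k$ drops near $b$ it stays within a constant multiple of $b$; this handles the additive $(M_2/M_1)^{2/3}+M_2$ part of the claimed bound. The more delicate part is the transient decay. I would set $u_k\triangleq a_k - b$ (or work directly with $a_k$ while $a_k$ exceeds a threshold of order $b$) and show that in the large regime the error term $M_2$ is dominated by the multiplicative decrease, so that effectively $a_{k+1}\leq a_k - \tfrac{1}{2}M_1 a_k^{3/2}$ up to a controllable slack. From a recursion of the form $a_{k+1}\leq a_k - cM_1 a_k^{3/2}$, the standard technique is to pass to reciprocals of square roots: setting $w_k\triangleq a_k^{-1/2}$, a Taylor/convexity estimate yields $w_{k+1}\geq w_k + c'M_1$, so $w_k\geq w_0 + c'M_1 k$, giving $a_k\leq (w_0+c'M_1 k)^{-2}=\mathcal O(1/(M_1^2 k^2))$. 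Matching constants to land exactly on $\tfrac{9}{(k+2)^2}\max\{a_0,2/M_1^2\}$ is then a matter of choosing the threshold and bookkeeping the shift by $2$ in the denominator. Finally I would stitch the two regimes together, noting that the geometric (small-regime) contribution is absorbed into the $1/(k+2)^2$ term since geometric decay is faster than polynomial.

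\textbf{Main obstacle.} The hard part will be rigorously controlling the additive error $M_2$ during the transient phase: the clean ODE intuition $\dot a=-M_1 a^{3/2}$ ignores $M_2$, and one must verify that while $a_k$ is in the large regime the additive $M_2$ is genuinely a lower-order perturbation rather than preventing the $1/k^2$ decay. Concretely, one wants that for $a_k$ above a suitable multiple of $(M_2/M_1)^{2/3}$ the decrement $M_1 a_k^{3/2}$ exceeds, say, $2M_2$, so that $a_{k+1}\leq a_k-\tfrac12 M_1 a_k^{3/2}$ and the reciprocal-square-root argument goes through cleanly; below that level the iterate is already within the floor $b$ and one switches to the small-regime contraction. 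Getting the constants to produce precisely the factor $9$ and the $(k+2)^2$ shift (rather than merely $\mathcal O(1/k^2)$) requires care in the induction, and I expect that an explicit induction on \eqref{eq:recursive} — verifying the base case at $k=1$ and the inductive step by plugging the bound into the recursion and checking both branches of the maximum — will be cleaner than the continuous heuristic, with the ODE picture serving only to guess the correct form of the bound.
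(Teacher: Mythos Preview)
Your regime identification is inverted: the factor $\max\{1/2,1-M_1\sqrt{a_k}\}$ equals $1/2$ precisely when $M_1\sqrt{a_k}\geq 1/2$, i.e.\ when $a_k$ is \emph{large}; the branch $1-M_1\sqrt{a_k}$ is active when $a_k\leq 1/(4M_1^2)$, i.e.\ when $a_k$ is \emph{small}. So the geometric halving acts on large iterates (driving them quickly below $1/(4M_1^2)$), while the slow $1/k^2$ decay governed by your ODE $\dot a\approx -M_1 a^{3/2}$ operates in the small regime. Your stitching narrative is therefore backwards, though the ODE heuristic and the identification of the floor $(M_2/M_1)^{2/3}$ are correct.

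Your final recommendation---direct induction on \eqref{eq:recursive}---is exactly what the paper does, but the two-way case split you describe (one case per branch of the max) together with a \emph{fixed} threshold of order $(M_2/M_1)^{2/3}$ is not enough to close the induction with the stated constants. The paper uses a \emph{three-way} split: when the $1-M_1\sqrt{a_k}$ branch is active it further distinguishes whether $a_k$ lies above or below the $k$-dependent threshold
\[
\Gamma_k\;\triangleq\;\frac{9}{2(k+2)^2}\max\Bigl\{a_0,\frac{2}{M_1^2}\Bigr\}+\Bigl(\frac{M_2}{M_1}\Bigr)^{2/3}.
\]
If $a_k\leq\Gamma_k$ one simply uses $a_{k+1}\leq a_k+M_2$ together with $9/\bigl(2(k+2)^2\bigr)\leq 9/(k+3)^2$. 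If $a_k>\Gamma_k$ one exploits the lower bound $\sqrt{a_k}>\sqrt{\Gamma_k}$ in two ways: first, $M_1^2\Gamma_k\geq 9/(k+2)^2$ (this is where the constant $2/M_1^2$ inside the max is used) gives $1-M_1\sqrt{\Gamma_k}\leq 1-3/(k+2)$; second, $\Gamma_k\geq (M_2/M_1)^{2/3}$ gives $M_2\leq (M_2/M_1)^{2/3}M_1\sqrt{\Gamma_k}$, which absorbs the additive $M_2$ into the multiplicative contraction. Then $(1+\tfrac{3}{k+2})(1-\tfrac{3}{k+2})\leq 1$ closes the step. Your reciprocal-square-root argument would recover an $\mathcal O(1/k^2)$ transient but not these exact constants; the $k$-dependent $\Gamma_k$ is the missing ingredient that makes the induction go through verbatim.
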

\begin{proof}
We use induction to show the result. Indeed, for $k=1$ we have that $a_1\leq \max\{1/2,1-M_1\sqrt{a_0}\}a_0+M_2\leq a_0+M_2$ which clearly satisfies \eqref{eq:recursive}. Now, suppose \eqref{eq:recursive} holds for $k\geq 1$, and we show the inequality for $k+1$. We begin by examining the recursive relation $a_{k+1}\leq \max\{1/2,1-M_1\sqrt{a_k}\}a_k+M_2$ and analyzing the different cases in which the maximum occurs on different terms. 

(CASE I) $\max\{1/2,1-M_1\sqrt{a_k}\}=\frac{1}{2}$: In this case, clearly $a_{k+1}\leq \frac{1}{2}a_k\leq \frac{9}{2(k+2)^2}\max\{a_0,\frac{2}{M_1^2}\} + \frac{1}{2}((\frac{M_2}{M_1})^{2/3}+M_2)\leq \frac{9}{(k+3)^2}\max\{a_0,\frac{2}{M_1^2}\} + (\frac{M_2}{M_1})^{2/3}+M_2$ where we used the fact that $(k+3)^2\leq 2(k+2)^2$ for any $k\geq 1$. 

(CASE II.a) $\max\{1/2,1-M_1\sqrt{a_k}\}=1-M_1\sqrt{a_k}$ and $a_k\leq \frac{9}{2(k+2)^2}\max\{a_0,\frac{2}{M_1^2}\} + (\frac{M_2}{M_1})^{2/3}$: In this case, one can observe that from the recursive inequality together with the current assumption we have that $a_{k+1}\leq a_k+M_2\leq \frac{9}{2(k+2)^2}\max\{a_0,\frac{2}{M_1^2}\} + (\frac{M_2}{M_1})^{2/3}+M_2\leq \frac{9}{(k+3)^2}\max\{a_0,\frac{2}{M_1^2}\} + (\frac{M_2}{M_1})^{2/3}+M_2$. 

(CASE II.b) $\max\{1/2,1-M_1\sqrt{a_k}\}=1-M_1\sqrt{a_k}$ and $a_k> \frac{9}{2(k+2)^2}\max\left\{a_0,\frac{2}{M_1^2}\right\} + (\frac{M_2}{M_1})^{2/3}$: Let $\Gamma_k\triangleq \frac{9}{2(k+2)^2}\max\left\{a_0,\frac{2}{M_1^2}\right\} + (\frac{M_2}{M_1})^{2/3}$. From the recursive inequality and \eqref{eq:recursive} we conclude that
\begin{align}\label{eq:ak+1}
a_{k+1}&\leq (1-M_1\sqrt{\Gamma_k})\left(\frac{9}{(k+2)^2}\max\left\{a_0,\frac{2}{M_1^2}\right\} + M_2+(\frac{M_2}{M_1})^{2/3}\right)+M_2\nonumber\\
&= \max\left\{a_0,\frac{2}{M_1^2}\right\}\frac{9}{(k+3)^2} \left(1+\frac{3}{k+2}\right)(1-M_1\sqrt{\Gamma_k})\nonumber \\ &\quad+(1-M_1\sqrt{\Gamma_k})\left(M_2+\left(\frac{M_2}{M_1}\right)^{2/3}\right)+M_2.
\end{align}
Next, we simplify the first two terms on the right-hand side of the above inequality by providing some upper bounds. In fact, a simple calculation reveals that   $(1-M_1\sqrt{\Gamma_k})\leq 1-\frac{3}{k+2}$ holds if and only if $M_1^2\Gamma_k\geq \frac{9}{(k+2)^2}$ which is true for any $k\geq 1$ since $\frac{M_1^2}{2}\frac{9}{(k+2)^2}\max\{a_0,\frac{2}{M_1^2}\}\geq \frac{9}{(k+2)^2}$. Moreover, from the fact that $\Gamma_k\geq (\frac{M_2}{M_1})^{2/3}$ for any $k\geq 1$, one can easily verify that $M_2\leq (\frac{M_2}{M_1})^{2/3}M_1\sqrt{\Gamma_k}\leq (M_2+(\frac{M_2}{M_1})^{2/3})M_1\sqrt{\Gamma_k}$, therefore, $(1-M_1\sqrt{\Gamma_k})(M_2+(\frac{M_2}{M_1})^{2/3})\leq (\frac{M_2}{M_1})^{2/3}$ for any $k\geq 1$. Using these two inequalities within \eqref{eq:ak+1} and the fact that $(1+\frac{3}{k+2})(1-\frac{3}{k+2})\leq 1$, we conclude that $a_{k+1}\leq \max\{a_0,\frac{2}{M_1^2}\}\frac{9}{(k+3)^2}+(\frac{M_2}{M_1})^{2/3}+M_2$ which completes the induction and henceforth the result of the lemma. 
\end{proof}

In the following, we provide the proof of Lemma \ref{one step lemma} which offers an upper bound on the decrease of $\cL_\mu(x,y^*_\mu(x))-\cL_\mu(x,y)$ based on the consecutive iterates.

{\bf Proof of Lemma \ref{one step lemma}}
Let $u_k\triangleq \frac{1}{2}(y_k+p_k)+\frac{\alpha}{8}\norm{y_k-p_k}^2v_k$ where $v_k\in\argmax_{\norm{v}\leq 1}\fprod{\grad_y\cL_\mu(x_k,y_k),v}$. From the definition of the conjugate norm, one can verify that $\fprod{\grad_y\cL_\mu(x_k,y_k),v_k}=\norm{\grad_y\cL_\mu(x_k,y_k)}_*$. Moreover, we note that since $y_k,p_k\in Y$ and $Y$ is $\alpha$-strongly convex we have that $u_k\in Y$. Recalling that $p_k=\argmax_{y\in Y}\fprod{\grad_y\cL_\mu(x_k,y_k),y}$ we conclude that 
\begin{align}\label{eq:opt-pk}
    \fprod{y_k-p_k,\grad_y\cL_\mu(x_k,y_k)}&\leq \fprod{y_k-u_k,\grad_y\cL_\mu(x_k,y_k)}\nonumber\\
    &=\frac{1}{2}\fprod{y_k-p_k,\grad_y\cL_\mu(x_k,y_k)}-\frac{\alpha}{8}\norm{y_k-p_k}^2\norm{\grad_y\cL_\mu(x_k,y_k)}_*.
\end{align}
Next, with a similar argument and using concavity of $\cL_\mu(x,\cdot)$ for any $y\in Y$, we have that $\fprod{y_k-p_k,\grad_y\cL_\mu(x_k,y_k)}\leq \cL_\mu(x_k,y_k)-\cL_\mu(x_k,y^*_\mu(x_k))$. Now, recall that $H_k= \cL_\mu(x_k,y^*_{\mu}(x_k))-\cL_\mu(x_k,y_k)$, then from \eqref{eq:opt-pk} we obtain
\begin{align}\label{eq:upper-inner-pk}
    \fprod{y_k-p_k,\grad_y\cL_\mu(x_k,y_k)}
    &\leq \frac{1}{2}\fprod{y_k-p_k,\grad_y\cL_\mu(x_k,y_k)}-\frac{\alpha}{8}\norm{y_k-p_k}^2\norm{\grad_y\cL_\mu(x_k,y_k)}_* \nonumber\\
    &\leq \frac{1}{2}\fprod{y_k-y^*_\mu(x_k),\grad_y\cL_\mu(x_k,y_k)}-\frac{\alpha}{8}\norm{y_k-p_k}^2\norm{\grad_y\cL_\mu(x_k,y_k)}_* \nonumber\\
    &\leq\mot{-}\frac{1}{2}H_k-\frac{\alpha}{8}\norm{y_k-p_k}^2\norm{\grad_y\cL_\mu(x_k,y_k)}_*.
\end{align}

Now, we will show one-step progress for the update of $y_{k+1}$. Indeed, from Lipschitz continuity of $\grad_y\cL_\mu(x,\cdot)$ we have that
\begin{align*}
    \cL_\mu(x_k,y_k)\leq \cL_\mu(x_k,y_{k+1})+\sigma_k\fprod{\grad_y\cL_\mu(x_k,y_k),y_k-p_k}+\frac{(L_{yy}+\mu)}{2}\sigma_k^2\norm{y_k-p_k}^2.
\end{align*}
Adding $\cL_\mu(x_k,y^*_\mu(x_k))$ to both sides of the above inequality, using \eqref{eq:upper-inner-pk}, and rearranging the terms lead to 
\begin{align}\label{eq:Lip-bound}
    \cL_\mu(x_k,y^*_\mu(x_k))-\cL_\mu(x_k,y_{k+1})&\leq \left(1-\frac{\sigma_k}{2}\right)H_k -\sigma_k\frac{\alpha}{8}\norm{y_k-p_k}^2\norm{\grad_y\cL_\mu(x_k,y_k)}_*\nonumber \\ &\quad +\frac{(L_{yy}+\mu)}{2}\sigma_k^2\norm{y_k-p_k}^2\nonumber\\
    &\leq \max\left\{\frac{1}{2},1-\frac{\alpha}{8(L_{yy}+\mu)}\norm{\grad_y\cL_\mu(x_k,y_k)}_*\right\}H_k,
\end{align}
where the last inequality follows from the choice of step-size 
$\sigma_k=\min\{1,\frac{\alpha}{4(L_{yy}+\mu)}\norm{\grad_y\cL_\mu(x_k,y_k)}_*\}$. 

Let us define $\gamma_k\triangleq \max\left\{\frac{1}{2},1-\frac{\alpha}{8(L_{yy}+\mu)}\norm{\grad_y\cL_\mu(x_k,y_k)}_*\right\}$. We will provide an upper bound for $\gamma_k$, by lower bounding $\norm{\grad_y\cL_\mu(x_k,y_k)}_*$ in terms of the function value using strong concavity of $\cL_\mu$. In fact, since for any $x\in X$, we have that $y^*_\mu(x)=\argmin_{y\in Y}\cL_\mu(x,y)$ then one can conclude that for any $y\in Y$, $\cL_\mu(x,y^*_\mu(x))-\cL_\mu(x,y)\geq \frac{\mu}{2}\norm{y-y^*_\mu(x)}^2$. Then, using concavity of $\cL_\mu(x,\cdot)$ we obtain that \begin{align*}
    \cL_\mu(x,y^*_\mu(x))-\cL_\mu(x,y)&\leq \fprod{\grad_y\cL_\mu(x,y),y^*_\mu(x)-y} \nonumber\\
    &\leq \norm{\grad_y\cL_\mu(x,y)}_*\norm{y^*_\mu(x)-y}\nonumber\\
    &\leq \norm{\grad_y\cL_\mu(x,y)}_*\sqrt{\frac{2}{\mu}(\cL_\mu(x,y^*_\mu(x))-\cL_\mu(x,y))}. 
\end{align*}
Therefore, $\norm{\grad_y\cL_\mu(x,y)}_*\geq \sqrt{\frac{\mu}{2}(\cL_\mu(x,y^*_\mu(x))-\cL_\mu(x,y))}$. This immediately implies that $\gamma_k\leq \max\{\frac{1}{2},1-\frac{\alpha\sqrt{\mu}}{8\sqrt{2}(L_{yy}+\mu)}\sqrt{H_k}\}$. Now using this lower bound within \eqref{eq:Lip-bound} we obtain the following result. 
\begin{align}\label{eq:intial-bound}
\cL_\mu(x_k,y^*_\mu(x_k))-\cL_\mu(x_k,y_{k+1})\leq \max\left\{\frac{1}{2},1-\frac{\alpha\sqrt{\mu}}{8\sqrt{2}(L_{yy}+\mu)}\sqrt{H_k}\right\}H_k.
\end{align}

The next step is to lower bound the left-hand side of the above inequality in terms of $H_{k+1}$. This is indeed possible by invoking Lipschitz continuity of $\grad_x\cL$ and the fact that $y^*_\mu(x_k)=\argmax_{y\in Y}\cL_\mu(x_k,y)$. In particular, one can easily verify that $\cL_\mu(x_k,y^*_\mu(x_k))-\cL_\mu(x_k,y_{k+1})\geq \cL_\mu(x_k,y^*_\mu(x_{k+1}))-\cL_\mu(x_k,y_{k+1})$, therefore, using Lipschitz continuity of $\grad_x\cL_\mu(\cdot,y)$ for any $y\in Y$, we obtain 
\begin{align}\label{eq:lower-to-Hk+1}
\cL_\mu(x_k,y^*_\mu(x_k))-\cL_\mu(x_k,y_{k+1})&\geq \cL_\mu(x_{k+1},y^*_\mu(x_{k+1}))-\cL_\mu(x_{k+1},y_{k+1}) \nonumber\\
&\quad +\fprod{\grad_x\cL_\mu(x_k,y^*_\mu(x_{k+1})) -\grad_x\cL_\mu(x_{k+1},y_{k+1}),x_k-x_{k+1}} \nonumber \\ &\quad -L_{xx}\norm{x_{k+1}-x_k}^2\nonumber\\
&\geq \cL_\mu(x_{k+1},y^*_\mu(x_k))-\cL_\mu(x_{k+1},y_{k+1}) \nonumber\\
&\quad -(L_{xx}\norm{x_{k+1}-x_k}+L_{yx}\norm{y_{k+1}-y^*_\mu(x_{k+1})})\norm{x_{k+1}-x_k}\nonumber \\ &\quad-L_{xx}\norm{x_{k+1}-x_k}^2\nonumber\\
&\geq H_{k+1}-L_{yx}\tau D_YD_X-2L_{xx}\tau^2D_X^2,
\end{align}
where the penultimate inequality follows from Cauchy-Schwarz inequality and Lipschitz continuity of $\grad_x\cL_\mu(x,\cdot)$ for any $x\in X$, and the last inequality follows from the update of $x_{k+1}$ as well as boundedness of $X$ and $Y$. 
Finally, using the above lower bound within \eqref{eq:intial-bound} leads to the desired result. \qed

\section{Convergence Analysis for Algorithm~\ref{alg:alg1}}\label{seclmo}
In this section, we prove the convergence result for Algorithm \ref{alg:alg1} which includes NC-C and NC-SC scenarios. 

\subsection{Proof of Theorem \ref{thm:LMO-LMO}} 
To show the convergence rate result, we consider implementing the result of Lemma \ref{lem:recursive} on \eqref{eq:Hk-telescope} by letting $a_k=H_k$, $M_1=\frac{\alpha\sqrt{\mu}}{8\sqrt{2}(L_{yy}+\mu)}$, and $M_2=\cE(\tau)$. Therefore, 
\begin{align}\label{eq:Hk-boundm}
H_k\leq \frac{9}{(k+2)^2}\max\left\{H_0,\frac{256(L_{yy}+\mu)^2}{\alpha^2\mu}\right\}+\cE(\tau)+\left(\frac{8\sqrt{2}(L_{yy}+\mu)\cE(\tau)}{\alpha\sqrt{\mu}}\right)^{2/3}.
\end{align}
Based on this inequality, we can obtain an upper bound on the distance between iterate $y_k$ and the regularized solution $y^*_\mu(x_k)$. Subsequently, we will show the convergence results in terms of dual and primal gap functions. 

In particular, we note that using strong concavity of $\cL_\mu(x,\cdot)$ for any $x\in X$, we have that $H_k\geq \frac{\mu}{2}\norm{y_k-y^*_\mu(x_k)}^2$; therefore, from \eqref{eq:Hk-boundm} one can deduce that for any $k\geq 1$, 
\begin{align}\label{eq:sol-error-ym}
\frac{\mu}{2}\norm{y_k-y^*_\mu(x_k)}^2&\leq \frac{9}{(k+2)^2}\max\left\{H_0,\frac{256(L_{yy}+\mu)^2}{\alpha^2\mu}\right\}+\cE(\tau)\nonumber \\ &\quad+\left(\frac{8\sqrt{2}(L_{yy}+\mu)\cE(\tau)}{\alpha\sqrt{\mu}}\right)^{2/3}.
\end{align}
Now using the fact that $\cG_Y(x_k,y_k)= \fprod{\grad_y\cL(x_k,y_k),p_k-y_k}$ one can easily verify that $\fprod{\grad_y\cL_\mu(x_k,y_k),p_k-y_k}\geq \cG_Y(x_k,y_k)-\mu D_Y^2$. 
Moreover, from Lipschitz continuity of $\grad_y\cL_\mu(x,\cdot)$ we conclude that
\begin{align*}
\cG_Y(x_k,y_k)&\leq\fprod{\grad_y\cL_\mu(x_k,y_k),p_k-y_k}+\mu D_Y^2 \\
&\leq\fprod{\grad_y\cL_\mu(x_k,y_k),y^*(x_k)-y_k}+\mu D_Y^2 \\
&\leq H_k+\frac{L_{yy}+\mu}{2}\norm{y_k-y^*(x_k)}^2+\mu D_Y^2\\
&\leq (2+\frac{L_{yy}}{\mu})H_k+\mu D_Y^2.
\end{align*}
Therefore, using \eqref{eq:Hk-boundm} we conclude that for any $k\geq 1$, 
\begin{align}\label{eq:subopt-y}
\cG_Y(x_k,y_k)&\leq \frac{9c_\mu}{(k+2)^2}\max\left\{H_0,\frac{256(L_{yy}+\mu)^2}{\alpha^2\mu}\right\}+c_\mu\cE(\tau)\nonumber \\ &\quad+\left(\frac{8\sqrt{2}(L_{yy}+\mu)\cE(\tau)}{\alpha\sqrt{\mu}}\right)^{2/3}c_\mu+\mu D_Y^2,
\end{align}
where $c_\mu\triangleq 2+L_{yy}/\mu$. 
which proves the bound for the dual gap function. 

Next, we show the convergence rate result in terms of the primal gap function. Recalling that $f_\mu(x)=\min_{y\in Y}\cL_\mu(x,y)$, from Lipschitz continuity of $\grad f_\mu$ we obtain
\begin{align*}
    f_\mu(x_{k+1})&\leq f_\mu(x_k)+\fprod{\grad f_\mu(x_k),x_{k+1}-x_k}+\frac{L_{f_\mu}}{2}\norm{x_{k+1}-x_k}^2\nonumber\\
    &=f_\mu(x_k)+\fprod{\grad f_\mu(x_k)-\grad_x\cL(x_k,y_k),x_{k+1}-x_k}+\fprod{\grad_x\cL(x_k,y_k),x_{k+1}-x_k}\nonumber \\ &\quad+\frac{L_{f_\mu}}{2}\norm{x_{k+1}-x_k}^2\nonumber\\
    &\leq f_\mu(x_k)+L_{yx}\norm{y_k-y^*_\mu(x_k)}\norm{x_{k+1}-x_k}+\fprod{\grad_x\cL(x_k,y_k),x_{k+1}-x_k}\nonumber \\ &\quad+\frac{L_{f_\mu}}{2}\norm{x_{k+1}-x_k}^2,
\end{align*}
where in the last inequality we used Lipschitz continuity of $\grad_x\cL(x,\cdot)$ for any $x\in X$. 
Using \eqref{eq:sol-error-ym} within the above inequality, recalling the update of $x_{k+1}=\tau s_k+(1-\tau)x_k$, boundedness of $X$, and rearranging the terms we obtain 
\begin{align*}
\tau\fprod{\grad_x\cL(x_k,y_k),x_k-s_k}&\leq f_\mu(x_k)-f_\mu(x_{k+1}) + \frac{L_{f_\mu}}{2}\tau^2 D_X^2 \nonumber \\ &\quad +L_{yx}\tau D_X\sqrt{\tfrac{2}{\mu}}\Big[\frac{3}{k+2}\max\left\{\sqrt{H_0},\frac{16(L_{yy}+\mu)}{\alpha\sqrt{\mu}}\right\}\nonumber\\
&\quad +\sqrt{\cE(\tau)}+\left(\tfrac{8\sqrt{2}(L_{yy}+\mu)\cE(\tau)}{\alpha\sqrt{\mu}}\right)^{1/3}\Big]. 
\end{align*}
Summing the above inequality over $k\in \cK$ where $\cK\triangleq \{\lceil K/2 \rceil,\hdots,K-1\}$, dividing both sides by $\tau K/2$, and noting that $\cG_X(x_k,y_k)=\fprod{\grad_x\cL(x_k,y_k),x_k-s_k}$ imply that 
\begin{align}\label{eq:initial-rate-LMOm}
    \frac{2}{K}\sum_{k\in \cK}\cG_X(x_k,y_k)&\leq \frac{2(f_\mu(x_0)-f_\mu(x_K))}{\tau K} + \frac{L_{f_\mu}\tau}{K} D_X^2 \nonumber \\ &\quad+\frac{2\sqrt{2}L_{yx}}{\sqrt{\mu}} D_X\Big[\frac{3\log(K+1)}{K}\max\left\{\sqrt{H_0},\frac{16(L_{yy}+\mu)}{\alpha\sqrt{\mu}}\right\}\nonumber\\
&\quad +\sqrt{\cE(\tau)}+\left(\tfrac{8\sqrt{2}(L_{yy}+\mu)\cE(\tau)}{\alpha\sqrt{\mu}}\right)^{1/3}\Big].
\end{align}
Moreover, we have that $f_\mu(x_0)-f_\mu(x_K)\leq f(x_0)-f(x_K)+\frac{\mu}{2}D_Y^2\leq f(x_0)-f(x^*)+\frac{\mu}{2}D_Y^2$. and defining $t\triangleq \argmin_{k\in\cK}\cG_X(x_k,y_k)$, implies that $\frac{2}{K}\sum_{k\in \cK}\cG_X(x_k,y_k)\geq \cG_X(x_{t},y_{t})$. Therefore, \eqref{eq:initial-rate-LMOm} together with the dual bound in \eqref{eq:subopt-y} and noting that $t\geq K/2$ leads to the desired result. \qed

\mort{\subsection{{\bf Proof of Corollary \ref{cor:LMO-LMO}}}}
Note that when $\tau<1$, then we have that  $\cE(\tau)=\cO(L_{yx}\tau)$, hence,  
\begin{align*}
\cG_X(x_{t},y_{t})&\leq \cO\biggr(\frac{f(x_0)-f(x^*)}{\tau K}+\frac{(L_{xx}+L_{yx}^2/\mu)\tau}{K}+\frac{\log(K)L_{yx}L_{yy}}{\mu K}+\frac{\sqrt{L_{yx}\tau}}{\sqrt{\mu}}\nonumber \\ &\quad +\frac{(L_{yx}L_{yy}\tau)^{1/3}}{\mu^{2/3}}\biggr),\\
\cG_Y(x_{t},y_{t})&\leq \cO\left(\frac{L_{yy}^3}{\mu^2 K^2}+\frac{L_{yy}^{5/3}\tau^{2/3}}{\mu^{4/3}}+\frac{\tau L_{yy}}{\mu}+\mu\right).
\end{align*}
Minimizing the above upper bounds simultaneously in $\tau$ by considering  $\mu$ as a parameter implies that $\tau=\cO(\mu^5/L_{yx}^3)$. Then replacing $\tau$, we can minimize the upper bounds in terms of $\mu$ which implies that $\mu=\cO(\frac{\sqrt{L_{yx}}}{K^{1/6}})$. Therefore, we conclude that $\tau=\cO(\frac{1}{K^{5/6}\sqrt{L_{yx}}})$ and $\cG_Z(x_t,y_t)=\cG_X(x_t,y_t)+\cG_Y(x_t,y_t)\leq \cO(1/K^{1/6})$. Therefore, an $\epsilon$-gap solution can be computed within $\cO(\epsilon^{-6})$ iterations by setting $\tau=\cO(\epsilon^5)$ and $\mu=\cO(\epsilon)$. \qed

\subsection{Proof of Theorem \ref{thm:C-SC:LMO-LMO}}
Recall that we assume  $\cL(x,\cdot)$ is $\tilde\mu$-strongly concave for any $x\in X$ and we set $\mu=0$ in Algorithm \ref{alg:alg1}. Following  similar steps as in Lemma \ref{one step lemma} one can readily obtain
\begin{align}\label{eq:SC-Y-SC}
    \fprod{y_k-p_k,\grad_y\cL(x_k,y_k)}&\leq \fprod{y_k-u_k,\grad_y\cL(x_k,y_k)}\nonumber\\
    &=\frac{1}{2}\fprod{y_k-p_k,\grad_y\cL(x_k,y_k)}-\frac{\alpha}{8}\norm{y_k-p_k}^2\norm{\grad_y\cL(x_k,y_k)}_* \nonumber \\
    &\leq -\frac{1}{2}H_k-\frac{\alpha}{8}\norm{y_k-p_k}^2\norm{\grad_y\cL(x_k,y_k)}_*.
\end{align}
Note that in this setting $y^*(x)\triangleq \argmax_{y\in Y}\cL(x,y)$ is uniquely defined as satisfies the result of Lemma \ref{lmu}. 
Moreover, from the update of $y_{k+1}$ and Lipschitz continuity of $\grad_y\cL(x,\cdot)$, one can obtain
\begin{align*}
    \cL(x_k,y_k)\leq \cL(x_k,y_{k+1})+\sigma_k\fprod{\grad_y\cL(x_k,y_k),y_k-p_k}+\frac{L_{yy}}{2}\sigma_k^2\norm{y_k-p_k}^2.
\end{align*}
Adding $\cL(x_k,y^*(x_k))$ to both sides of the above inequality  and rearranging the terms lead to 
\begin{align*}
    \cL(x_k,y^*(x_k))-\cL(x_k,y_{k+1})&\leq \left(1-\frac{\sigma_k}{2}\right)H_k -\sigma_k\frac{\alpha}{8}\norm{y_k-p_k}^2\norm{\grad_y\cL(x_k,y_k)}_*\nonumber \\ &\quad +\frac{L_{yy}}{2}\sigma_k^2\norm{y_k-p_k}^2\nonumber\\
    &\leq \max\left\{\frac{1}{2},1-\frac{\alpha}{8L_{yy}}\norm{\grad_y\cL(x_k,y_k)}_*\right\}H_k,
\end{align*}
where the last inequality follows from the choice of step-size
$\sigma_k=\min\{1,\frac{\alpha}{4(L_{yy})}\norm{\grad_y\cL(x_k,y_k)}_{*}\}$.
Then defining $\widetilde H_k\triangleq \cL(x_k,y^*(x_k))-\cL(x_k,y_k)$ for any $k\geq 0$, and following the same steps for proving \eqref{eq:intial-bound}, we obtain the following one-step improvement bound
\begin{align} 
\cL(x_k,y^*(x_k))-\cL(x_k,y_{k+1})\leq \max\left\{\frac{1}{2},1-\frac{\alpha\sqrt{\tilde \mu}}{8\sqrt{2L_{yy}}}\sqrt{\widetilde H_k}\right\}\widetilde H_k.
\end{align}
Next, we can find a lower-bound for the left-hand side of the above inequality similar to \eqref{eq:lower-to-Hk+1} to conclude that 
\begin{align}\label{eq:one-step-SC}
&\widetilde H_{k+1}\leq \max\left\{\frac{1}{2},1-\frac{\alpha\sqrt{\tilde\mu}}{8\sqrt{2}L_{yy}}\sqrt{\widetilde H_k}\right\}\widetilde H_k + \cE(\tau).
\end{align}


Now, to show the convergence rate result, we consider implementing the result of Lemma \ref{lem:recursive} on \eqref{eq:one-step-SC} by letting $a_k=\widetilde H_k$, $M_1=\frac{\alpha\sqrt{\tilde \mu}}{8\sqrt{2}L_{yy}}$, and $M_2=\cE(\tau)$ which implies that 
\begin{align}\label{eq:Hk-bound}
\widetilde H_k\leq \frac{9}{(k+2)^2}\max\left\{\widetilde H_0,\frac{256(L_{yy})^2}{\alpha^2\tilde \mu}\right\}+\cE(\tau)+\left(\frac{8\sqrt{2}L_{yy}\cE(\tau)}{\alpha\sqrt{\tilde\mu}}\right)^{2/3}.
\end{align}
Following similar steps as in the proof of inequality \eqref{eq:subopt-y} and using \eqref{eq:Hk-bound} instead of \eqref{eq:Hk-boundm} lead to the following bound for the dual gap function 
\begin{align}\label{eq:subopt-y-SC}
\cG_Y(x_k,y_k)&\leq \frac{9c_{\tilde\mu}}{(k+2)^2}\max\left\{\widetilde H_0,\frac{256 L_{yy}^2}{\alpha^2\tilde \mu}\right\}+c_{\tilde\mu}\cE(\tau)+\left(\frac{8\sqrt{2} L_{yy}\cE(\tau)}{\alpha\sqrt{\tilde\mu}}\right)^{2/3}c_{\tilde\mu}.
\end{align}

The upper bound on the primal gap function can be obtained by following the same lines as in the proof of Theorem \ref{thm:LMO-LMO} to obtain \eqref{eq:initial-rate-LMOm}. In particular, one can show that 
\begin{align}\label{eq:initial-rate-LMO-SC}
    \cG_X(x_{t},y_{t})\leq \frac{2}{K}\sum_{k\in \cK}\cG_X(x_k,y_k)&\leq \frac{2(f(x_0)-f(x_K))}{\tau K} + \frac{L_{f}\tau}{K} D_X^2 \nonumber \\ &\quad+\frac{2\sqrt{2}L_{yx}}{\sqrt{\tilde\mu}} D_X\Big[\frac{3\log(K+1)}{K}\max\left\{\sqrt{\widetilde H_0},\frac{16L_{yy}}{\alpha\sqrt{\tilde\mu}}\right\}\nonumber\\
&\quad +\sqrt{\cE(\tau)}+\left(\frac{8\sqrt{2}L_{yy} \cE(\tau)}{\alpha\sqrt{\tilde\mu}}\right)^{1/3}\Big].
\end{align}
Letting $\tau=\cO(\frac{1}{K^{3/4}\sqrt{L_{yx}}})$ in \eqref{eq:subopt-y-SC} and \eqref{eq:initial-rate-LMO-SC} imply that $\cG_X(x_{t},y_{t})\leq \cO(1/K^{1/4})$ and $\cG_Y(x_{t},y_{t})\leq \cO(1/K^{1/2})$. Therefore, to achieve $\cG_X(x_{t},y_{t})\leq \epsilon$ Algorithm \ref{alg:alg1} with $\mu=0$ requires $\cO(\epsilon^{-4})$ iterations while achieving $\cG_Y(x_{t},y_{t})\leq \epsilon$ requires $\cO(\epsilon^{-2})$ iterations. \qed 
 
\section{Required Lemmas for Theorems \ref{thm:LMO-PO} and \ref{thm:C-SC:LMO-PO} } \label{seclemmapo}
\begin{lemma}\label{rho}
Suppose Assumptions \ref{assump1} and \ref{assump2} hold and $\{(x_k,y_k)\}_{k\geq 0}$ be the sequence generated by Algorithm \ref{alg:alg2}. Let $\sigma_k=\sigma\leq \frac{2}{\mort{L_{yy}+2\mu}}$. Then for any $k\geq 0$,
\begin{align} \label{be4.4}
    \norm{y_k-y^*_\mu(x_k)}\leq \rho\norm{y_{k-1}-y^*_\mu(x_k)}
\end{align}
where $\rho\triangleq \max\{|1-\sigma(L_{yy}+\mu)|,|1-\sigma\mu|\}$.
\end{lemma}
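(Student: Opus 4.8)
The plan is to read the dual step of Algorithm \ref{alg:alg2} as one iteration of projected gradient ascent on the regularized objective $\cL_\mu(x_k,\cdot)$ and to invoke the sharp contraction estimate for such a step on a strongly concave, smooth function. Throughout I fix $k$, abbreviate $\bar y\triangleq y^*_\mu(x_k)$, and recall that $\grad_y\cL_\mu(x,y)=\grad_y\cL(x,y)-\mu(y-y_0)$, so that the update producing $y_k$ reads $y_k=\cP_Y\big(y_{k-1}+\sigma\,\grad_y\cL_\mu(x_k,y_{k-1})\big)$. Set $h\triangleq -\cL_\mu(x_k,\cdot)$; by Assumption \ref{assump1}(II) together with the added quadratic, $h$ is $\mu$-strongly convex and has an $L$-Lipschitz gradient with $L\triangleq L_{yy}+\mu$.

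First I would record the fixed-point characterization of $\bar y$. Since $\bar y=\argmax_{y\in Y}\cL_\mu(x_k,y)$ and $\cL_\mu(x_k,\cdot)$ is strongly concave, the optimality condition $\fprod{\grad_y\cL_\mu(x_k,\bar y),\,y-\bar y}\le 0$ for all $y\in Y$ is equivalent, for every $\sigma>0$, to the projection identity $\bar y=\cP_Y\big(\bar y+\sigma\,\grad_y\cL_\mu(x_k,\bar y)\big)$. Subtracting this identity from the update and using that the Euclidean projection onto the convex set $Y$ is non-expansive removes the projection:
\[
\norm{y_k-\bar y}\le \big\|(y_{k-1}-\bar y)-\sigma\big(\grad h(y_{k-1})-\grad h(\bar y)\big)\big\|.
\]
It therefore suffices to show that the unconstrained gradient operator of $h$ contracts by the factor $\rho$.

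For the core estimate I would set $d\triangleq y_{k-1}-\bar y$ and $\Delta\triangleq \grad h(y_{k-1})-\grad h(\bar y)$, expand $\norm{d-\sigma\Delta}^2=\norm{d}^2-2\sigma\fprod{d,\Delta}+\sigma^2\norm{\Delta}^2$, and apply the standard interpolation (co-coercivity) inequality for a $\mu$-strongly convex, $L$-smooth function, $\fprod{d,\Delta}\ge \tfrac{\mu L}{\mu+L}\norm{d}^2+\tfrac{1}{\mu+L}\norm{\Delta}^2$. Because the hypothesis $\sigma\le \tfrac{2}{L_{yy}+2\mu}=\tfrac{2}{\mu+L}$ makes the coefficient $\sigma^2-\tfrac{2\sigma}{\mu+L}$ of $\norm{\Delta}^2$ non-positive, I may then lower-bound $\norm{\Delta}^2\ge \mu^2\norm{d}^2$ (strong convexity); the bracket collapses to exactly $1-2\sigma\mu+\sigma^2\mu^2=(1-\sigma\mu)^2$, giving $\norm{y_k-\bar y}\le(1-\sigma\mu)\norm{y_{k-1}-\bar y}$. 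Finally, a direct check shows that under $\sigma\le \tfrac{2}{\mu+L}$ one has $\max\{|1-\sigma\mu|,|1-\sigma(L_{yy}+\mu)|\}=1-\sigma\mu$ (the two terms agree at $\sigma=\tfrac{2}{\mu+L}$, while for smaller $\sigma$ the inequality $|1-\sigma L|\le 1-\sigma\mu$ holds since $\sigma(\mu+L)\le 2$), so the contraction factor is precisely $\rho$ and \eqref{be4.4} follows.

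The main obstacle is obtaining the sharp constant in the core estimate: invoking strong monotonicity and Lipschitzness of $\grad h$ separately yields only the weaker bound $1-2\sigma\mu+\sigma^2L^2$, which exceeds $\rho^2$; the advertised $\rho$ requires the interpolation inequality, and the step-size restriction $\sigma\le 2/(\mu+L)$ is exactly what turns the otherwise adversarial $\norm{\Delta}^2$ term into one we can discard in our favor. The remaining ingredients—the variational/projection characterization of $\bar y$ and non-expansiveness of $\cP_Y$—are routine.
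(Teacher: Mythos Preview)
Your proof is correct and follows essentially the same approach as the paper: fixed-point characterization of $y^*_\mu(x_k)$, non-expansiveness of $\cP_Y$, then contraction of the unconstrained gradient-step map $y\mapsto y+\sigma\nabla_y\cL_\mu(x_k,y)$. The paper packages the last step more compactly by observing that this map is $\nabla g_k$ for $g_k(y)\triangleq\tfrac{1}{2}\|y\|^2+\sigma\cL_\mu(x_k,y)$ and asserting that $\nabla g_k$ is $\rho$-Lipschitz, whereas you unpack the same fact explicitly via the co-coercivity (interpolation) inequality and the step-size bound; both routes yield the identical constant.
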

\begin{proof}
Let $\cL_\mu(x,y)\triangleq \cL(x,y)-\frac{\mu}{2}\norm{y-y_0}^2$ and recall that 
$y^*_\mu(x)=\argmin_{y\in Y}\cL_\mu(x,y)$; therefore, from the optimality condition we have that $y^*_\mu(x)=\cP_Y(y^*_\mu(x)+\sigma\grad_y\cL_\mu(x,y^*_\mu(x)))$ for any $x\in X$. From the update of $y_k$ and the non-expansivity of the projection operator, we have that 
\begin{align*}
    \norm{y_k-y^*_\mu(x_k)}\leq \norm{y_{k-1}+\sigma\grad_y\cL_\mu(x_k,y_{k-1})-(y^*_\mu(x_k)+\sigma\grad_y\cL_\mu(x,y^*_\mu(x_k)))}.
\end{align*}
Now, let us define function $g_k:Y\to \reals$ such that $g_k(y)\triangleq \frac{1}{2}\norm{y}^2+\sigma\cL_\mu(x_k,y)$. Note that for any $k\geq 0$, $g_k(\cdot)$ is continuously differentiable and has a Lipschitz continuous gradient with parameter $\rho=\max\{|1-\sigma(L_{yy}+\mu)|,|1-\sigma\mu|\}$. Therefore, one can immediately conclude the result by noting that $\grad g_k(y_k)=y_{k}+\sigma\grad_y\cL_\mu(x_k,y_{k})$. 
\end{proof}

\begin{lemma}\label{lem:rate-y} 
Under the premises of Lemma \ref{rho}, assume $\tau_k=\tau\geq0$, we have
    \begin{equation*}
        \norm{y_k-y^*_\mu(x_k)}\leq\rho^k\norm{y_0-y^*_\mu(x_0)} + \frac{L_{yx}\rho}{\mu(1-\rho)}\tau D_X.
    \end{equation*}
\end{lemma}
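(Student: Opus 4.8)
The plan is to turn the one-step contraction of Lemma \ref{rho} into a linear recursion for the error $e_k \triangleq \norm{y_k - y^*_\mu(x_k)}$ and then unroll it as a geometric series. First I would start from Lemma \ref{rho}, which gives $e_k \leq \rho\,\norm{y_{k-1} - y^*_\mu(x_k)}$. The only subtlety here is that the contraction compares the previous iterate $y_{k-1}$ against the \emph{new} target $y^*_\mu(x_k)$ rather than the old target $y^*_\mu(x_{k-1})$, so I would insert the old target through the triangle inequality:
\[
\norm{y_{k-1} - y^*_\mu(x_k)} \leq \norm{y_{k-1} - y^*_\mu(x_{k-1})} + \norm{y^*_\mu(x_{k-1}) - y^*_\mu(x_k)}.
\]

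Next I would control the drift of the optimal trajectory appearing in the last term. By Lemma \ref{lmu}, $\norm{y^*_\mu(x_{k-1}) - y^*_\mu(x_k)} \leq \tfrac{L_{yx}}{\mu}\norm{x_{k-1} - x_k}$, and since the Frank--Wolfe update in Algorithm \ref{alg:alg2} gives $x_k = \tau s_{k-1} + (1-\tau)x_{k-1}$ with $s_{k-1}, x_{k-1} \in X$, compactness of $X$ yields $\norm{x_{k-1} - x_k} = \tau\norm{x_{k-1} - s_{k-1}} \leq \tau D_X$. Combining these bounds with the displayed triangle inequality produces the clean recursion
\[
e_k \leq \rho\, e_{k-1} + \frac{\rho L_{yx}}{\mu}\tau D_X.
\]

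Finally I would unroll this recursion from index $k$ down to $0$, which gives $e_k \leq \rho^k e_0 + \tfrac{\rho L_{yx}\tau D_X}{\mu}\sum_{j=0}^{k-1}\rho^j$; bounding the finite geometric sum by $\sum_{j=0}^{k-1}\rho^j \leq \tfrac{1}{1-\rho}$ yields exactly the claimed estimate
\[
e_k \leq \rho^k\,\norm{y_0 - y^*_\mu(x_0)} + \frac{L_{yx}\rho}{\mu(1-\rho)}\tau D_X.
\]
The one point that requires care, and which I regard as the real crux rather than the bookkeeping above, is justifying $\rho < 1$ so that the geometric sum converges. I would verify this directly from the step-size condition $\sigma \leq \tfrac{2}{L_{yy}+2\mu}$: since $\mu > 0$ we have $\sigma(L_{yy}+\mu) < \sigma(L_{yy}+2\mu) \leq 2$ and $0 < \sigma\mu < 2$, whence both $|1-\sigma(L_{yy}+\mu)| < 1$ and $|1-\sigma\mu| < 1$, so that $\rho = \max\{|1-\sigma(L_{yy}+\mu)|,\,|1-\sigma\mu|\} < 1$. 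With $\rho<1$ in hand the remaining steps are routine.
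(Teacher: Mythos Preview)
Your proposal is correct and follows essentially the same route as the paper: apply Lemma~\ref{rho}, use the triangle inequality together with Lemma~\ref{lmu} and the bound $\norm{x_k-x_{k-1}}\leq\tau D_X$ from the Frank--Wolfe update, and then unroll the resulting recursion as a geometric series. Your explicit verification that $\rho<1$ under the step-size condition is a useful addition that the paper leaves implicit.
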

\begin{proof}
From Lemma \ref{rho} we have that $\norm{y_k-y^*_\mu(x_k)}\leq \rho\norm{y_{k-1}-y^*_\mu(x_k)}$. Using the triangle inequality we conclude that 
\begin{align}\label{eq:rate-yk}
    \norm{y_k-y^*_\mu(x_k)}&\leq \rho \biggr(\norm{y_{k-1}-y^*_\mu(x_{k-1})}+ \norm{y^*_\mu(x_k)-y^*_\mu(x_{k-1})}\biggr)\nonumber\\
    &\overset{(a)}{\leq} \rho\left(\norm{y_{k-1}-y^*_\mu(x_{k-1})}+ \frac{L_{yx}}{\mu}\norm{x_k-x_{k-1}}\right)\nonumber\\
    &\overset{(b)}{\leq} \rho\left(\norm{y_{k-1}-y^*_\mu(x_{k-1})}+ \frac{L_{yx}}{\mu}\tau D_X\right) \nonumber\\    
    &\overset{(c)}{\leq} \rho^k\norm{y_0-y^*_\mu(x_0)} + \frac{L_{yx}\rho}{\mu(1-\rho)}\tau D_X,
\end{align}
where $(a)$ follows from Lemma \ref{lmu}; $(b)$ follows from the assumption that $X$ is a bounded set with diameter $D\geq 0$, and $(c)$ is derived from Lemma \ref{rho}.
\end{proof}

\section{Convergence Analysis for Algorithm \ref{alg:alg2}}\label{secpo}

In this section, we prove the convergence result for Algorithm \ref{alg:alg2} which includes NC-C and NC-SC scenarios.

\subsection{Proof of Theorem \ref{thm:LMO-PO}} 
From Lipschitz continuity of $\grad f_\mu$ we have that
\begin{align*}
    f_\mu(x_{k+1})&\leq f_\mu(x_k)+\fprod{\grad f_\mu(x_k),x_{k+1}-x_k}+\frac{L_{f_\mu}}{2}\norm{x_{k+1}-x_k}^2\nonumber\\
    &=f_\mu(x_k)+\fprod{\grad f_\mu(x_k)-\grad_x\cL(x_k,y_k),x_{k+1}-x_k}+\fprod{\grad_x\cL(x_k,y_k),x_{k+1}-x_k}\nonumber \\ &\quad+\frac{L_{f_\mu}}{2}\norm{x_{k+1}-x_k}^2\nonumber\\
    &\leq f_\mu(x_k)+L_{yx}\norm{y_k-y^*_\mu(x_k)}\norm{x_{k+1}-x_k}+\fprod{\grad_x\cL(x_k,y_k),x_{k+1}-x_k}\nonumber \\ &\quad+\frac{L_{f_\mu}}{2}\norm{x_{k+1}-x_k}^2, 
\end{align*}
where in the last inequality we used Lipschitz continuity of $\grad_x\cL(x,\cdot)$ for any $x\in X$. Next, using Lemma \eqref{lem:rate-y} in the above inequality, recalling the update of $x_{k+1}=\tau s_k+(1-\tau)x_k$, and rearranging the terms we obtain 
\begin{align*}
    \tau \fprod{\grad_x\cL(x_k,y_k),x_k-s_k}&\leq f_\mu(x_k)-f_\mu(x_{k+1}) + L_{yx}\tau D_X\rho^k\norm{y_0-y^*_\mu(x_0)} \nonumber \\ &\quad + \left(\frac{L_{yx}^2\rho}{\mu(1-\rho)} + \frac{L_{xx}}{2}+\frac{L_{yx}^2}{2\mu}\right)\tau^2 D_X^2.
\end{align*}
Summing the above inequality over $k\in \cK$ where $\cK\triangleq \{\lceil K/2 \rceil,\hdots,K-1\}$, dividing both sides by $\tau K/2$, and defining $\mort{\mathcal G_X(x_k,y_k)}\triangleq  \fprod{\grad_x\cL(x_k,y_k),x_k-s_k}$ imply that 
\begin{align}\label{eq:initial-rate0}
    \frac{2}{K}\sum_{k\in \cK}\mort{\mathcal G_X(x_k,y_k)}&\leq \frac{2(f_\mu(x_0)-f_\mu(x_K))}{\tau K} + \frac{2L_{yx}}{(1-\rho)K}D_X\norm{y_0-y^*_\mu(x_0)} \nonumber \\ &\quad + \left(\frac{2L_{yx}^2\rho}{\mu(1-\rho)} + L_{xx}+\frac{L_{yx}^2}{\mu}\right)\tau D_X^2.
\end{align}
Note that $f_\mu(x_0)-f_\mu(x_K)\leq f(x_0)-f(x_K)+\frac{\mu}{2}D_Y^2\leq f(x_0)-f(x^*)+\frac{\mu}{2}D_Y^2$. 
Finally, we let $t\triangleq \argmin_{k\in\cK}\mathcal \mort{\mathcal G_X(x_k,y_k)}$, then $\frac{2}{K}\sum_{k\in \cK}\cG_X(x_k,y_k)\geq \mathcal G_X(x_{t},y_{t})$. Therefore, \eqref{eq:initial-rate0} leads to the bound on the primal gap function. 

Next, we obtain an upper-bound for the dual gap function $\cG_Y(x_k,y_k)=\frac{1}{\sigma}\norm{y_k-\cP_Y(y_k+\grad_y\cL(x_k,y_k))}$. In particular, using the triangle inequality we have that for any $k\geq 0$, 
\begin{align}\label{eq:G_Y-bound}
\cG_Y(x_k,y_k)&\nonumber\leq \frac{1}{\sigma}\Big(\norm{y_k-\cP_Y(y_k+\sigma\grad_y\cL(x_k,y_k))}\\ \nonumber
&\quad+\norm{\cP_Y(y_k+\sigma\grad_y\cL_\mu(x_k,y_k))-\cP_Y(y_k+\sigma\grad_y\cL(x_k,y_k))}\Big)\nonumber\\
&\leq \frac{1}{\sigma}\left(\norm{y_k-y_{k+1}}+\sigma\norm{\grad_y\cL_\mu(x_k,y_k))-\grad_y\cL(x_k,y_k)}\right)\nonumber\\
&\leq  \frac{1}{\sigma}\norm{y_k-y_{k+1}}+\mu D_Y,
\end{align}
where the second inequality follows from non-expansivity of the projection mapping and the last inequality follows from the definition of $\grad_y\cL_\mu$ and boundedness of set $Y$. 

On the other hand, using the triangle inequality one can observe that for any $k\geq 0$, 
\begin{align}\label{eq:consec-iter-y}
    \norm{y_{k+1}-y_k}&\leq \norm{y_{k+1}-y^*_\mu(x_{k+1})}+\norm{y^*_\mu(x_{k})-y_k}+\norm{y^*_\mu(x_{k+1})-y^*_\mu(x_{k})}\nonumber\\
    &\leq 2\rho^k\norm{y_0-y^*_\mu(x_0)} + \frac{2L_{yx}\rho}{\mu(1-\rho)}\tau D_X+\frac{L_{yx}}{\mu}\tau D_X,
\end{align}
where the last inequality follows from Lemma \ref{lmu} and \ref{lem:rate-y}. 
Finally, the desired result follows from plugging \eqref{eq:consec-iter-y} in \eqref{eq:G_Y-bound} evaluated at $k=t$, and noting that $\rho^{t}\leq \rho^{K/2}$. \qed 

\subsection{Proof of Corollary \ref{cor:LMO-PO}}
Theorem \ref{thm:LMO-PO} implies that
\begin{align}
        \mathcal G_X(x_t,y_t)&\leq \cO\Big(\frac{1}{\tau K}+  \frac{L_{yx}}{(1-\rho)K} + \Big(\frac{L_{yx}^2\rho}{\mu(1-\rho)} + L_{xx}+\frac{L_{yx}^2}{\mu}\Big)\tau\Big),\\
    \mathcal G_Y(x_t,y_t)&\leq \cO\Big(\frac{\rho^K}{\sigma} + \frac{L_{yx}\rho}{\sigma\mu(1-\rho)}\tau +\frac{L_{yx}}{\sigma\mu}\tau + \mu \Big).\label{eq:consecutive iter}
\end{align}


Selecting $\tau = \cO(1/K^{3/4})$ and $\mu=\cO(1/K^{1/4})$, together with the fact that $\rho^K\leq (1-\sigma\mu)^K\leq \exp(-\sigma\mu K)=\cO(\exp(-K^{3/4}))$ implies that $\cG_Z(x_t,y_t)=\cG_X(x_t,y_t)+\cG_Y(x_t,y_t)\leq \cO(1/K^{1/4})$. Therefore, to achieve $\cG_Z(x_t,y_t)\leq \epsilon$ Algorithm \ref{alg:alg2} requires $\cO(\epsilon^{-4})$ iterations. 
\qed

\subsection{ Proof of Theorem \ref{thm:C-SC:LMO-PO}}
The proof follows the same steps as in the proof of Theorem \ref{thm:LMO-PO}. First, one needs to note that since $\cL(x,\cdot)$ is $\tilde\mu$-strongly concave  for any $x\in X$, $y^*(x)=\argmax_{y\in Y}\cL(x,y)$ is uniquely defined for any $x\in X$. Therefore, the result in Lemma \ref{lem:rate-y} will be modified as follows
    \begin{equation*}
        \norm{y_k-y^*(x_k)}\leq\tilde\rho^k\norm{y_0-y^*(x_0)} + \frac{L_{yx}\tilde\rho}{\tilde\mu(1-\tilde\rho)}\tau D_X,
    \end{equation*}
where $\tilde\rho\triangleq \max\{|1-\sigma L_{yy}|,|1-\sigma \tilde \mu|\}$. 

Next, following the same argument for showing equation \eqref{eq:initial-rate0} and \eqref{eq:consec-iter-y} we conclude that
\begin{align}\label{eq:initial-rate0-SC}
    \cG_X(x_t,y_t)\leq \frac{2}{K}\sum_{k\in \cK}{\mathcal G_X(x_k,y_k)}&\leq \frac{2(f(x_0)-f(x_K))}{\tau K} + \frac{2L_{yx}}{(1-\tilde\rho)K}D_X\norm{y_0-y^*(x_0)} \nonumber \\ &\quad + \left(\frac{2L_{yx}^2\tilde\rho}{\tilde\mu(1-\tilde\rho)} + L_{xx}+\frac{L_{yx}^2}{\tilde\mu}\right)\tau D_X^2,
\end{align}
and 
\begin{align}\label{eq:consec-iter-y-sc}
    \cG_Y(x_t,y_t)=\frac{1}{\sigma}\norm{y_{t+1}-y_t}&\leq \frac{2\tilde\rho^t}{\sigma}\norm{y_0-y^*(x_0)} + \frac{2L_{yx}\tilde\rho}{\sigma\tilde\mu(1-\tilde\rho)}\tau D_X+\frac{L_{yx}}{\sigma\tilde\mu}\tau D_X\nonumber\\
    &\leq \frac{2\tilde\rho^{K/2}}{\sigma}\norm{y_0-y^*(x_0)} + \frac{2L_{yx}\tilde\rho}{\sigma\tilde\mu(1-\tilde\rho)}\tau D_X+\frac{L_{yx}}{\sigma\tilde\mu}\tau D_X.
\end{align}

Finally, selecting $\tau = \cO(1/K^{1/2})$  implies that $\cG_Z(x_t,y_t)=\cG_X(x_t,y_t)+\cG_Y(x_t,y_t)\leq \cO(1/K^{1/2})$. Therefore, to achieve $\cG_Z(x_t,y_t)\leq \epsilon$ Algorithm \ref{alg:alg2} requires $\cO(\epsilon^{-2})$ iterations. \qed

\section{Experiment Details}\label{example details}
In this section, we provide the details of the experiment to solve Dictionary Learning problem in Example \ref{dictionary}. In particular, we consider solving the following SP problem
\begin{align*}
    \min_{(\bD',\bC')\in X}\max_{y\in[0,B]}~\tfrac{1}{2n'}\|{\bA'-\bD'\bC'}\|_F^2+y\left(\tfrac{1}{2n}\|{\bA-\bD'\tilde \bC}\|_F^2- \delta\right),
\end{align*}
where $X=\{(\bD',\bC')\mid \|{\bC'}\|_*\leq r, ~\|{d_j'}\|_2\leq 1, \forall j\in\{1,\hdots,q\}\}$. 

{\bf Dataset Generation.}
We generate the old dataset matrix $\bA=\bD\bC\in \reals^{m\times n}$ where $\bD\in\reals^{m\times p}$ is generated randomly with elements drawn from the standard Gaussian distribution whose columns are scaled to have a unit $\ell_2$-norm, and $\bC= \frac{1}{\norm{U}_2\norm{V}_2}UV^\top$ where $U\in\reals^{p\times l}$ and $V\in\reals^{n\times l}$ are generated randomly with elements drawn from the standard Gaussian distribution. The matrix $\tilde\bC\in\reals^{q\times n}$ is generated by adding $q-p$ columns of zeros to $\bC$. The new dataset $\bA'\in\reals^{m\times n'}$ is generated randomly with elements drawn from the standard Gaussian distribution. 

{\bf Initialization.} All the methods start from the same initial point $x_0=(\bD'_0,\bC'_0)$ and $y_0=0$ where $\bD'$ is generated randomly with elements drawn from the uniform distribution in $[0,0.1]$ whose columns are scaled to have a unit $\ell_2$-norm, and $\bC'_0=\mathbf{0}_{q\times n'}$. For all the algorithms we set the maximum number of iterations $K=10^3$.  

{\bf Implementation Details.}
In this experiment, we let $n=500$, $m=100$, $p=50$, $l=5$, $q=60$, $n'=10^3$, $\delta=10^{-4}$, $r=5$, and $B=1$. 
We compare the performance of our proposed methods R-PDCG (Algorithm \ref{alg:alg1}) and CG-RPGA (Algorithm \ref{alg:alg2}) with the Alternating Gradient Projection (AGP) algorithm presented by \cite{xu2023unified} and the Saddle Point Frank Wolfe (SPFW) algorithm proposed by \cite{gidel2017frank}. Although the theoretical result for SPFW only holds in the convex-concave setting with certain assumptions, we have included it in our experiment to enable a comparison with another method that employs the LMO in both primal and dual updates. Moreover, we compare these methods in terms of the gap function defined in Definition \ref{def:LMO-gap} and infeasibility corresponding to the nonlinear constraint in \eqref{eq:dic}. Since the algorithms use different oracles, to have a fair comparison we plot the performance metrics versus time (second). In Figure \ref{dl2}, we compared the gap versus iteration and running time of algorithms for a fixed duration.
\begin{figure}[H]
    \centering
    \includegraphics[scale=0.32]{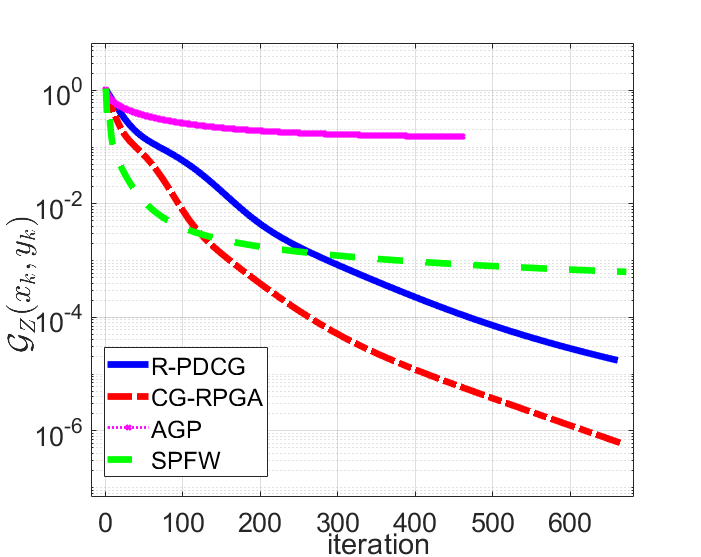} 
    \includegraphics[scale=0.32]{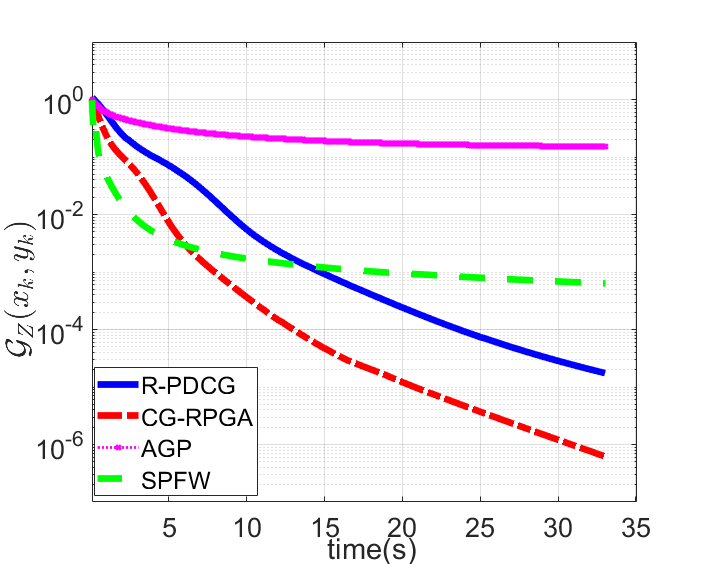}
    \caption{Comparing the performance of our proposed methods R-PDCG (blue) and CG-RPGA (red) with AGP (magenta) and SPFW (green) for a fixed amount of time in the Dictionary Learning problem.}
    \label{dl2}
\end{figure}

\section{Additional Experiments}\label{additional example}
To highlight the performance of our proposed methods for the example of Robust Multiclass Classification problem described in Example \ref{example1}, we compared different methods in terms of the number of iterations. 
Figure \ref{rm2} shows the performance of the methods in terms of the gap function versus the number of iterations within a fixed time.
Notable, AGP takes only a few iterations due to its need for full SVD. In Figure \ref{fig:rcv1_news20}, we conducted additional iterations of AGP to offer a more precise evaluation of its performance in comparison to other algorithms, considering a 1000-iteration count.
\begin{figure}[htb]
    \centering
    \subfigure[rcv1]{\includegraphics[scale=0.33]{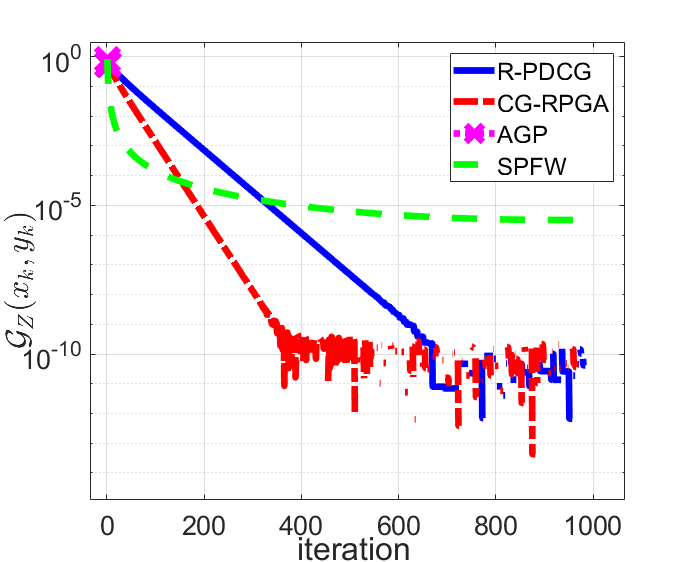}}
    \subfigure[news20]{\includegraphics[scale=0.32]{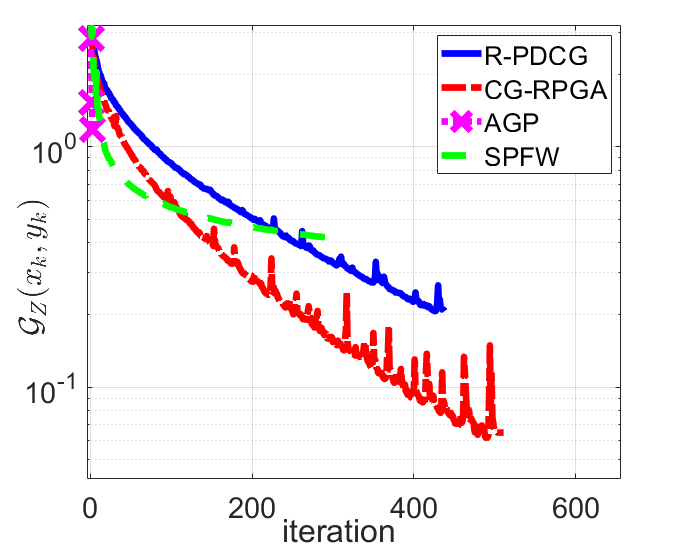}}
    \caption{Comparing the performance of our proposed methods R-PDCG (blue) and CG-RPGA (red) with AGP (magenta) and SPFW (green) in the Robust Multiclass Classification problem.}
    \label{rm2}
\end{figure}
\begin{figure}[H]
    \centering
    \subfigure[rcv1]{\includegraphics[scale=0.33]{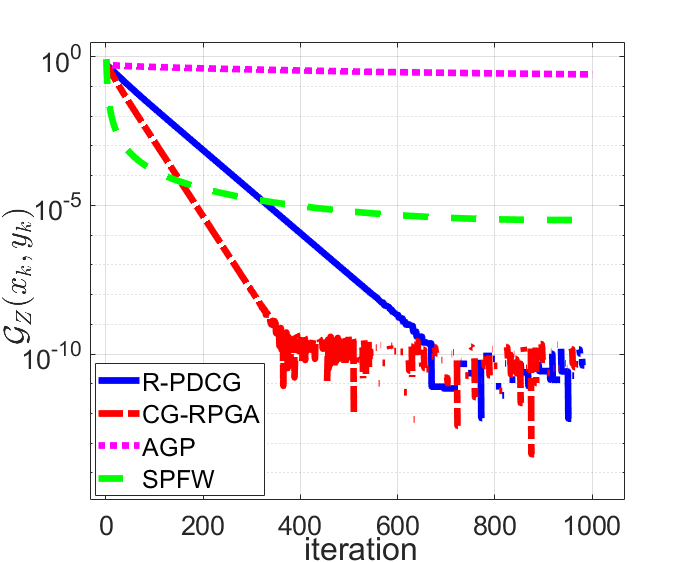}}
    \subfigure[news20]{\includegraphics[scale=0.33]{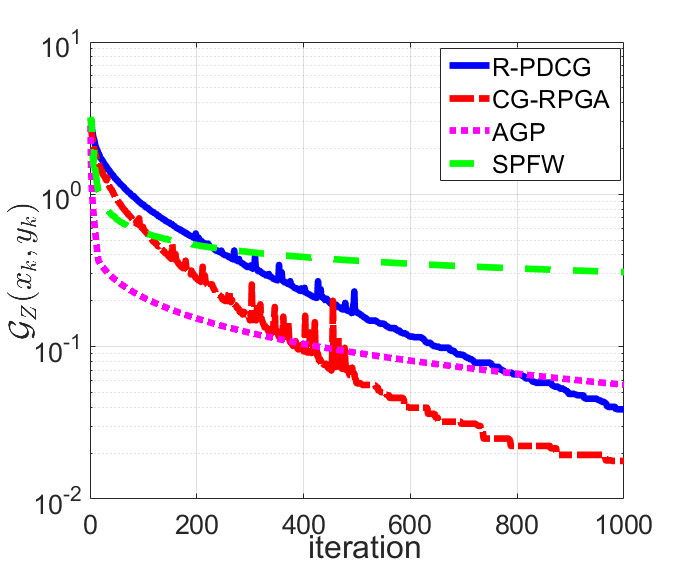}}
    \caption{Comparing the performance of our proposed methods R-PDCG (blue) and CG-RPGA (red) with AGP (magenta) and SPFW (green) in the Robust Multiclass Classification problem considering a fixed iteration count.}
    \label{fig:rcv1_news20}
\end{figure}

\end{document}
